\documentclass[12pt,reqno]{amsart}
\usepackage[left=3cm,top=2cm,right=3cm,bottom=2cm]{geometry}
\usepackage{amssymb}
\usepackage{amsbsy}
\usepackage{epsfig}
\usepackage{tikz}
\usepackage[english]{babel}    
\usepackage{enumitem}
\usepackage{multicol}
\usepackage{blkarray}
\usepackage{tikz}
\usepackage{mathtools}
\usepackage{amsthm}
\usepackage{float}
\usepackage{url}
\usepackage{algorithm}
\usepackage{algpseudocode}

\usepackage[symbol*]{footmisc}

\algnewcommand{\IIf}[1]{\State\algorithmicif\ #1\ \algorithmicthen}
\algnewcommand{\EndIIf}{\unskip\ \algorithmicend\ \algorithmicif}

\algnewcommand{\FFor}[1]{\State\algorithmicfor\ #1\ \algorithmicdo}
\algnewcommand{\EndFFor}{\unskip\ \algorithmicend\ \algorithmicfor}

\usepackage{graphicx}

\newtheorem{teore}{Theorem}[section]


\newtheorem{defn}[teore]{Definition}
\newtheorem{lemat}[teore]{Lemma}
\newtheorem{coro}[teore]{Corollary}

\theoremstyle{definition}
\newtheorem{Example}[teore]{Example}

\DeclareMathOperator{\diam}{diam}
\DeclareMathOperator*{\Spec}{Spec}
\DeclareMathOperator*{\DSpec}{DSpec}

\begin{document}
\DeclarePairedDelimiter\ceil{\lceil}{\rceil}
\DeclarePairedDelimiter\floor{\lfloor}{\rfloor}

\title[Minimum number of distinct eigenvalues of a threshold graph]{The minimum number of distinct eigenvalues of a threshold graph is at most $4$}

\author[L. E. Allem]{L. Emilio Allem}
 \address{UFRGS - Universidade Federal do Rio Grande do Sul, 
 Instituto de Matem\'atica e Estatística, Porto Alegre, Brazil}\email{emilio.allem@ufrgs.br}

\author[C. Hoppen]{Carlos Hoppen}
\address{UFRGS - Universidade Federal do Rio Grande do Sul, 
Instituto de Matem\'atica e Estatística, Porto Alegre, Brazil}\email{choppen@ufrgs.br}

\author[J. Lazzarin]{João Lazzarin}
 \address{UFSM - Universidade Federal de Santa Maria, 
 Departamento de Matemática, Santa Maria, Brazil}\email{joao.lazzarin@ufsm.br}

\author[L. S. Sibemberg]{Lucas Siviero Sibemberg}
 \address{UFRGS - Universidade Federal do Rio Grande do Sul, 
 Instituto de Matem\'atica e Estatística, Porto Alegre, Brazil}\email{lucas.siviero@ufrgs.br}

\author[F. C. Tura]{Fernando Colman Tura}
 \address{UFSM - Universidade Federal de Santa Maria, 
 Departamento de Matemática, Santa Maria, Brazil}\email{fernando.tura@ufsm.br}

\subjclass{05C50,15A29}

\keywords{Minimum number of distinct eigenvalues, threshold graphs}

\maketitle

\begin{abstract}
In this note we show that the minimum number of distinct eigenvalues of a threshold graph is at most $4$. Moreover, given any threshold graph $G$ and any nonzero real number $\lambda$, we explicitly construct a matrix $M$ associated with $G$ such that DSpec$(M)\subseteq\{-\lambda,0,\lambda,2\lambda\}$.
\end{abstract}


\section{Introduction}
Let $G$ be a simple graph with vertex set $V=\{v_1,\ldots,v_n\}$ and edge set $E$. We associate the following collection $S(G)$ of real $n\times n$ symmetric matrices with $G$:
$$S(G)=\left\{A \in \mathbb{R}^{n \times n} \colon A=A^{T} \mbox{ and } \left(\forall~i\neq j \right) \left(a_{ij}\neq 0 \Longleftrightarrow \{v_i,v_j\}\in E\right)\right\}.$$
In other words, the off-diagonal nonzero entries of a mat rix $A \in S(G)$ are precisely the entries $ij$ for which there is an edge $\{v_i,v_j\}\in E$, while the entries on the main diagonal may be chosen freely. For a square matrix $A$, let DSpec$(A)$ denote the set of distinct eigenvalues of $A$. Moreover, for a graph $G$, we define
$$q(G)=\mbox{min}\{|\mbox{DSpec}(A)|:A\in S(G)\}.$$

Extensive research has been conducted on the parameter $q(G)$ across various classes of graphs. For instance, the class of matrices whose underlying graph is a connected acyclic graph (that is, a tree) has attracted the interest of researchers since the early work of Parter~\cite{Parter60} and Wiener~\cite{WIENER1984}, which was followed by work of Leal Duarte, Johnson and their collaborators, see for example~\cite{ParterWiener03,JOHNSON20027,DUARTE1989173,leal2002minimum}. For the state of the art for trees, we refer to the book by Johnson and Saiago \cite{2018eigenvalues}. In particular, it is known that, for any tree $T$, $q(T) \geq \diam(T)+1$, where $\diam(T)$ is the diameter of the tree\footnote{The diameter of a tree $T$ is the number of edges on a longest path in $T$.}. This inequality may be strict, and an important problem is to characterize the trees for which it holds with equality, which are known as diminimal trees. Recently, Johnson and Wakhare~\cite{johnson2022inverse} showed that all linear trees are diminimal, where a tree is linear if all vertices of degree at least three lie on a common path. Furthermore, Allem et al.~\cite{allem2023diminimal} described infinite families of diminimal trees of any fixed diameter, which include infinitely many trees that are not linear. 

The parameter $q(G)$ has also been studied for other classes of graphs. For instance, the paper of Ahmadi et al.~\cite{F2013} gives general bounds for $q(G)$ in terms of other graph parameters and computes its exact value for some classes of graphs, including bipartite graphs and cycles. 
The graphs for which $q(G)$ is at least $|V(G)|-1$ are characterized in \cite{barrett2017generalizations}. In \cite{levene2022orthogonal} the authors study this parameter for the join of particular types of graphs. Moreover, Barrett et al.~\cite{barrett2023regular} characterized the regular graphs $G$ with degree at most 4 that satisfy $q(G) = 2$. And, for an in-depth discussion of problems of this type, we refer to the book by Hogben, Lin and Shader~\cite{hogben2022inverse}.

The current paper is concerned with threshold graphs, a class of graphs that was independently introduced by Chvátal and Hammer \cite{CHVATAL1977145} and by Henderson and Zalcstein \cite{henderson1977graph} in 1977. They are a natural class of graphs that has numerous applications in diverse areas which include computer science and psychology \cite{mahadev1995threshold}. With respect to the current problem, Fallat and Mojallal~\cite{fallat2022minimum} found properties of connected threshold graphs $G$ with $q(G)=2$ and gave the general upper bound $q(G)\leq\frac{|V(G)|}{4}+C$, where $C$ is an absolute constant, for all connected threshold graphs. Our main contribution is the following strengthening of this bound.
\begin{teore}\label{main_theorem}
If $G$ is a threshold graph and $\lambda \neq 0$ is a real number, then there is a matrix $M\in S(G)$ such that $\DSpec(M)\subseteq\{-\lambda,0,\lambda,2\lambda\}$. In particular, $q(G) \leq 4$.
\end{teore}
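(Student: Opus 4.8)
The plan is to reduce to $\lambda=1$ by scaling, and then run an induction along the vertex-addition structure of threshold graphs with a strengthened hypothesis. Scaling works because if $N\in S(G)$ satisfies $\DSpec(N)\subseteq\{-1,0,1,2\}$, then $\lambda N\in S(G)$ (as $\lambda\neq 0$) and $\DSpec(\lambda N)\subseteq\{-\lambda,0,\lambda,2\lambda\}$, whence $q(G)\le|\DSpec(\lambda N)|\le 4$. So it suffices to produce, for every threshold graph $G$, some $M\in S(G)$ with $\DSpec(M)\subseteq\{-1,0,1,2\}$.

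The proof is by induction on $n=|V(G)|$, and the point is to prove the \emph{stronger} statement: every threshold graph $G$ admits a matrix $M\in S(G)$ with $\DSpec(M)\subseteq\{-1,0,1,2\}$ \emph{and} an eigenvector $f$ of $M$, all of whose entries are nonzero, whose eigenvalue $\mu$ lies in $\{0,1\}$. For $n=1$, take $M=[0]$ and $f=[1]$. For $n\ge 2$, I would use the standard fact that $G$ is obtained from a threshold graph $G'$ on $n-1$ vertices by adding a vertex $u$ that is either isolated or adjacent to all of $V(G')$; let $M',f',\mu^*$ be provided by the inductive hypothesis for $G'$. If $u$ is isolated, set $M=M'\oplus[\mu^*]$ and $f=(f',1)$: then $\DSpec(M)=\DSpec(M')\cup\{\mu^*\}\subseteq\{-1,0,1,2\}$ and $Mf=\mu^* f$. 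If $u$ is dominating, rescale $f'$ to a unit vector $b$ and set $M=\begin{pmatrix}M'&b\\ b^{T}&\mu^*\end{pmatrix}$; this lies in $S(G)$ because the new off-diagonal entries are exactly the (nonzero) entries of $b$. Since $b$ is a unit $\mu^*$-eigenvector of $M'$ and $\mu^*\in\DSpec(M')$, the Schur-complement formula yields
\[
\det(xI-M)=\det(xI-M')\left(x-\mu^*-\frac{1}{x-\mu^*}\right)=\frac{\det(xI-M')}{x-\mu^*}\cdot\big((x-\mu^*)^2-1\big),
\]
a polynomial whose roots lie in $\DSpec(M')\cup\{\mu^*-1,\mu^*+1\}\subseteq\{-1,0,1,2\}$ because $\mu^*\in\{0,1\}$; hence $\DSpec(M)\subseteq\{-1,0,1,2\}$. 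Finally I would take $f=(b,1)$ with $\mu=1$ when $\mu^*=0$, and $f=(-b,1)$ with $\mu=0$ when $\mu^*=1$; a one-line check gives $Mf=\mu f$, all entries of $f$ are nonzero, and $\mu\in\{0,1\}$, so the strengthened hypothesis is restored.

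The crux — and the only place where something nonobvious happens — is the dominating-vertex step, together with the choice of invariant carried through it. Knowing only that $\DSpec(M')\subseteq\{-1,0,1,2\}$ gives no control on the spectrum of a border of $M'$; but bordering by a multiple of a nowhere-zero eigenvector simultaneously respects the zero/nonzero pattern required by $S(G)$ and collapses the Schur complement to the quadratic $(x-\mu^*)^2-1$, whose roots $\mu^*\pm1$ stay inside $\{-1,0,1,2\}$ \emph{precisely} when $\mu^*\in\{0,1\}$ (the values $\mu^*=-1$ or $\mu^*=2$ would push a new eigenvalue outside the target set). One must then verify that the enlarged matrix again admits a nowhere-zero eigenvector with eigenvalue in $\{0,1\}$, which it does, the special eigenvalue toggling $0\leftrightarrow1$ at each dominating step; the remaining items — the Schur-complement identity and the eigenvalue equations $Mf=\mu f$ — are routine.
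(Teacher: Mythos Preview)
Your proof is correct and takes a genuinely different route from the paper. The paper works with the block (bag) representation $0^{a_1}1^{a_2}\cdots b_r^{a_r}$: it first applies a symmetry-based reduction (Theorem~\ref{thm:red}) to replace $G$ by a smaller ``one-vertex-per-bag'' weighted threshold graph $T_1$ together with isolated vertices of weight $0$ or $\lambda$, and then uses an iterative merging lemma (Lemma~\ref{key}) to peel off the bags of $T_1$ one at a time, choosing the inter-bag edge-weights so that each peeled eigenvalue lands in $\{-\lambda,0,\lambda,2\lambda\}$; this yields closed-form entries for $M$ (Algorithm~\ref{algT1}) and the exact multiplicities of each eigenvalue. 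Your argument instead runs a vertex-by-vertex induction along the $0/1$ construction sequence, carrying the extra invariant of a nowhere-zero eigenvector with eigenvalue in $\{0,1\}$; the dominating step is handled by bordering with that eigenvector and the Schur-complement identity, which confines the two new eigenvalues to $\mu^*\pm 1$. This is considerably shorter and more elementary than the paper's machinery, and it is equally constructive. What the paper's approach buys in exchange for its length is explicit closed-form weights depending only on the bag sizes, together with a precise count of the multiplicity of each of $-\lambda,0,\lambda,2\lambda$ in terms of $r\bmod 4$ and the $a_i$.
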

Theorem~\ref{main_theorem} makes progress towards a full understanding of the minimum number of distinct eigenvalues of a given threshold graph. We note that this result is tight, given that threshold graphs with binary sequence given by $0^{k_{1}}10^{k_{2}}1$\footnote{See Section~\ref{sec:rep} for the representation of threshold graphs.} of order $n\geq 7$, with $k_{1}\geq 3$ and $k_{2}\geq 2$ have $q(G)=4$ according to Theorem 6.4 in \cite{fallat2022minimum}.

A nice feature of our proof of Theorem~\ref{main_theorem} is that it is constructive, leading to a simple algorithm that, with $G$ and $\lambda$ as an input, produces a matrix $M$ such with underlying graph $G$ and spectrum contained in $\{-\lambda,0,\lambda,2\lambda\}$. Another important aspect of Theorem~\ref{main_theorem} is that, unlike most other results about $q(G)$, for which $G$ is assumed to be connected, we can also obtain a bound on $q(G)$ for disconnected graphs such that every component is a threshold graph. To understand why this holds, consider arbitrary vertex-disjoint graphs $G_1,\ldots,G_\ell$ and let $k=\max_i q(G_i)$. Suppose that there exists a set $S$, $|S|=k$, with the property that there are matrices $M_1,\ldots,M_\ell$ whose underlying graphs are $G_1,\ldots,G_\ell$, respectively, such that $\Spec(M_i)\subseteq S$ for all $i\in\{1,\ldots,\ell\}$. 
We may define a matrix $M$ with underlying graph $G_1 \cup \cdots \cup G_\ell$ by setting each submatrix corresponding to $V(G_i)$ equal to $M_i$. Clearly, $\Spec(M)=S$, so that $q(G_1 \cup \cdots \cup G_\ell)=k$. As a consequence, Theorem \ref{main_theorem} gives the following. 
\begin{coro}
    \label{union}
    Let $G_1,\ldots,G_\ell$ be vertex-disjoint threshold graphs. Then their union satisfies $q(G_1\cup \cdots \cup G_\ell)\leq 4$.
\end{coro}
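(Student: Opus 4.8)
The plan is to reduce the statement directly to Theorem~\ref{main_theorem}, exploiting the fact that the four-element set it provides can be chosen \emph{uniformly} across all the components. Concretely, fix any nonzero real number, say $\lambda=1$. For each index $i\in\{1,\ldots,\ell\}$, apply Theorem~\ref{main_theorem} to the threshold graph $G_i$ with this common value of $\lambda$; this produces a matrix $M_i\in S(G_i)$ with $\DSpec(M_i)\subseteq\{-1,0,1,2\}$.

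Next I would assemble these into a single block-diagonal matrix. Since $G_1,\ldots,G_\ell$ are vertex-disjoint and no edges join distinct components, the matrix $M=M_1\oplus\cdots\oplus M_\ell$, with the diagonal blocks indexed by $V(G_1),\ldots,V(G_\ell)$ in this order, lies in $S(G_1\cup\cdots\cup G_\ell)$: its off-diagonal support is exactly the edge set of the union, and the diagonal entries remain unconstrained. Because the spectrum of a block-diagonal matrix is the multiset union of the spectra of its blocks, $\DSpec(M)=\bigcup_{i=1}^{\ell}\DSpec(M_i)\subseteq\{-1,0,1,2\}$, a set of size at most $4$. Hence $q(G_1\cup\cdots\cup G_\ell)\le|\DSpec(M)|\le 4$.

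There is essentially no obstacle here beyond observing that the set $\{-\lambda,0,\lambda,2\lambda\}$ in Theorem~\ref{main_theorem} is independent of $G$ once $\lambda$ is fixed; it is precisely this independence that lets the spectra of the individual $M_i$ be forced into a common four-element set and then merged. This is the general principle discussed in the paragraph preceding the corollary, specialized to threshold-graph components and to the bound $k\le 4$, with the added convenience that one need not even compute $q(G_i)$ for the individual components.
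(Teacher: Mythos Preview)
Your proof is correct and follows exactly the approach outlined in the paragraph preceding the corollary: fix a common $\lambda$, apply Theorem~\ref{main_theorem} to each component to obtain matrices with spectra contained in the same four-element set, and take the block-diagonal sum. There is nothing to add.
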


The paper is organized as follows. 
In Section \ref{sec:rep}, we describe different representations of threshold graphs. Using the so-called bag representation of threshold graphs, we define uniform weighted threshold graphs in Section \ref{sec:uniform}. The symmetries of these graphs are explored in Section \ref{sec:red} to associate the spectrum of a large weighted uniform threshold graph with the spectra of smaller graphs. In Section \ref{sec:cons}, these reductions lead to a constructive process that shows that the minimum number of distinct eigenvalues of threshold graphs is at most four. An explicit algorithm to compute a matrix with this property for every threshold graph and every nonzero $\lambda$ appears in Section \ref{sec:alg}. We conclude the paper with final remarks and open problems in  Section~\ref{sec:concl}.


\section{Representations of Threshold Graphs}
\label{sec:rep}

A threshold graph can be constructed from an isolated vertex through an iterative process which, at each step, either adds a new isolated vertex or a new \emph{dominating vertex}, i.e., a vertex that is adjacent to all previously added vertices. Naturally,
this allows us to represent a threshold graph on $n$ vertices as a binary sequence $(b_{1}b_{2}\cdots b_{n})$ such that $b_1=0$ and such that $b_{i}=0$ if vertex $v_{i}$ is added as an isolated vertex, and $b_{i}=1$ if $v_{i}$ is added as a dominating vertex. This binary sequence can also be viewed as a sequence of consecutive blocks of $0$'s and $1$'s represented by $0^{a_{1}}1^{a_{2}}0^{a_{3}}1^{a_{4}}\cdots b_i^{a_i} \cdots b_{r}^{a_{r}}$ where $a_{i}\geq 1$ for $1\leq i\leq r$, and $b_{i}=0$ if $i$ is odd and $b_{i}=1$ if $i$ is even. Obviously, this graph is connected if and only if $r$ is even, or $r=1$ and $a_1=1$. In Figure \ref{binary} we depict the threshold graph $G$ with binary sequence $b=(001110011)=0^{2}1^{3}0^{2}1^{2}$.

\begin{figure}[H]
\begin{tikzpicture}[scale=0.8]
  [scale=0.65,auto=left,every node/.style={circle}]
  \foreach \i/\w in {1/,2/,3/,4/,5/,6/,7/,8/,9/}{
    \node[draw,circle,fill=black,label={360/9 * (\i - 1)+90}:\i] (\i) at ({360/9 * (\i - 1)+90}:3) {\w};} 
  \foreach \from in {3,4,5}{
    \foreach \to in {1,2,3,4,5,\from}
      \draw (\from) -- (\to);}
      
       \foreach \from in {8,9}{
       \foreach \to in {1,2,3,4,5,6,7,8,\from}
       \draw (\from) -- (\to)
       ;}
\end{tikzpicture}
       \caption{The threshold graph defined by $b=(001110011)=0^{2}1^{3}0^{2}1^{2}$.}
       \label{binary}
 \end{figure}
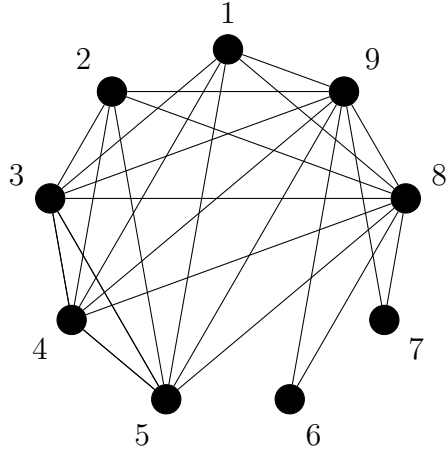

Given that the class of threshold graphs can also be described as the class of graphs that do not contain any graph in $\{P_4,C_4,2K_2\}$ as an induced subgraph, it is a subclass of the well-known class of \emph{complement irreducible graphs} (or \emph{cographs}, for short), which are the graphs that do not contain $P_4$ as an induced subgraph. According to Corneil, Lerchs, and Burlingham~\cite{Corneil1981}, cographs have been independently rediscovered under different names in many disparate areas of mathematics. As is the case for threshold graphs, they can also be constructed in terms of a sequence of operations, as we now explain. Given graphs $G_1=(V_1,E_1)$ and $G_2=(V_2,E_2)$ such that $V_1\cap V_2=\emptyset$, their \emph{union} $G_1 \cup G_2$ is the graph with vertex set $V_1 \cup V_2$ and edge set $E_1 \cup E_2$. Their \emph{join} $G_1 \otimes G_2$ is the graph with vertex set $V_1 \cup V_2$ and edge set $E_1 \cup E_2 \cup \{\{v_1,v_2\} \colon v_1 \in V_1, v_2 \in V_2\}$. A graph $G$ is a cograph if it is an isolated vertex or if $G=G_1 \cup G_2$ or $G=G_1 \otimes G_2$ for two smaller cographs $G_1$ and $G_2$. As a consequence, any cograph can be obtained from a set of isolated vertices by a sequence of operations of type $\cup$ or $\otimes$. 

We shall refer to the names of the vertices corresponding to a block of the binary sequence that defines a threshold graph as the elements of a bag. For instance, in Figure \ref{binary}, the bag corresponding to first block is $B_1=\{1,2\}$, the bag corresponding to the second block is $B_2=\{3,4,5\}$ and so on. 

In this paper, we shall use this representation, which is called the bag representation~\cite{jones2023exploring} of a threshold graph. Precisely, the bag representation of $G$ with binary sequence $(0^{a_{1}}1^{a_{2}}0^{a_{3}}1^{a_{4}}\cdots b_{r}^{a_{r}})$ is a sequence $\{B_{i}\}_{i=1}^{r}$ such that $|B_{i}|=a_{i}$, where a block $0^{a_{i}}$ is associated with a $\cup$-bag $B_{i}$ and a block $1^{a_{i}}$ is associate to a $\otimes$-bag $B_{i}$. Distinct bags are disjoint. For instance, the threshold graph $G$, represented in Figure \ref{binary} has a bag representation with $r=4$ bags, where $B_{1}=\{1,2\}$, $B_{2}=\{3,4,5\}$, $B_{3}=\{6,7\}$ and $B_{4}=\{8,9\}$ as depicted in Figure~\ref{bag_repr}.

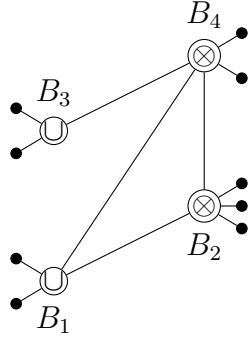
\begin{figure}[H]
\centering
\begin{tikzpicture}
\path(0,0)node[shape=circle,draw,label=below:$B_{1}$,inner sep=0] (bag1) {$\cup$}
    ( -.5,0.3)node[shape=circle,draw,fill=black,inner sep=0pt,minimum size=4pt] (bag12) {}
    ( -.5,-0.3)node[shape=circle,draw,fill=black,inner sep=0pt,minimum size=4pt] (bag11) {}

    (2,1)node[shape=circle,draw,label=below:$B_{2}$,inner sep=0] (bag2) {$\otimes$}
    ( 2.5,1.3)node[shape=circle,draw,fill=black,inner sep=0pt,minimum size=4pt] (bag22) {}
    ( 2.5,1.0)node[shape=circle,draw,fill=black,inner sep=0pt,minimum size=4pt] (bag23) {}
    ( 2.5,0.7)node[shape=circle,draw,fill=black,inner sep=0pt,minimum size=4pt] (bag21) {}

    (0,2)node[shape=circle,draw,label=above:$B_{3}$,inner sep=0] (bag3) {$\cup$}
    ( -0.5,2.3)node[shape=circle,draw,fill=black,inner sep=0pt,minimum size=4pt] (bag32) {}
    ( -0.5,1.7)node[shape=circle,draw,fill=black,inner sep=0pt,minimum size=4pt] (bag31) {}

    (2,3)node[shape=circle,draw,label=above:$B_{4}$,inner sep=0] (bag4) {$\otimes$}
    ( 2.5,3.3)node[shape=circle,draw,fill=black,inner sep=0pt,minimum size=4pt] (bag42) {}
    ( 2.5,2.7)node[shape=circle,draw,fill=black,inner sep=0pt,minimum size=4pt] (bag41) {}
;
    
      \draw[-](bag1)--(bag11);
      \draw[-](bag1)--(bag12);

      \draw[-](bag2)--(bag21);
      \draw[-](bag2)--(bag22);
      \draw[-](bag2)--(bag23);

      \draw[-](bag3)--(bag31);
      \draw[-](bag3)--(bag32);

      \draw[-](bag4)--(bag41);
      \draw[-](bag4)--(bag42);

      \draw[-](bag2)--node[below,sloped] {}(bag1);

      \draw[-](bag4)--node[above,sloped] {}(bag1);
      \draw[-](bag4)--node[right] {}(bag2);
      \draw[-](bag4)--node[above,sloped] {}(bag3);
\end{tikzpicture}
\caption{\label{bag_repr}Bag representation of the threshold graph $0^{2}1^{3}0^{2}1^{2}$.}
\end{figure}

To simplify the notation, we will also accept block representations of the form $(b_j^{a_{j}}b_{j+1}^{a_{j+1}}\cdots b_{r}^{a_{r}})$ with bag representation $\{B_i\}_{i=j}^r$. If $j$ is odd, we have $b_j=0$, i.e., the sequence is a standard bag representation. If $j$ is even, we have $b_j=1$, and we call this representation a $\otimes$-bag representation. In both cases, the sequence $b_j,b_{j+1},\ldots,b_r$ alternates in $\{0,1\}$.

\section{Weighted Threshold Graphs}
\label{sec:uniform}

In this paper, given a threshold graph $G$, instead of working with a matrix $M\in S(G)$ we view it as a weighted threshold graph.

\begin{defn}   A \emph{weighted graph} is a graph with weighted vertices and edges. We write it as a triple $(G, p, \epsilon)$, where $G=(V,E)$ is a simple graph, $p : V \rightarrow \mathbb{R}$ is the function that assigns weights to vertices, and $\epsilon : E  \rightarrow \mathbb{R}\setminus\{0\}$ is the function that assigns weights to edges. The  values of these functions are called \emph{vertex weights} and \emph{edge weights}, respectively.
\end{defn}

The matrix $M = [m_{ij}]$ of a given weighted graph $(G, p, \epsilon)$ with vertices $v_1,v_2, \ldots, v_n$,  is defined by
$$m_{ij} =\left\{
        \begin{array}{ll}
        p_i,  &  \mbox{ if } i = j,\\
        \epsilon_{i,j}, & \mbox{ if } i \not= j \textrm{ and }\{v_i,v_j\} \in E, \\
        0, & \mbox{ if } i \not= j \textrm{ and }\{v_i,v_j\} \notin E.
                 \end{array} \right.
$$

We shall consider a specific type of weighted threshold graph, which we call uniform weighted threshold graph. 
This is a weighted threshold graph $G$ with bag representation $\{B_{i}^{a_{i}}\}_{i=1}^{r}$: 
\begin{itemize}
\item[(a)] each vertex in the bag $B_{i}$ has weight $p_{i}$ for $1\leq i\leq r$;
\item[(b)] the edges connecting the vertices within a
$\otimes$-bag $B_{i}$ have weight $\epsilon_{i}\neq 0$ and for a $\cup$-bag $B_{i}$ we write $\epsilon_{i}=0$ for simplicity;
\item[(c)] each vertex in the $\otimes$-bag $B_{j}$ is connected to all vertices in the bag $B_{i}$ for $i<j$ by an edge with weight $\epsilon_{i,j}\neq 0$. For simplicity, given a $\cup$-bag $B_{j}$, we sometimes write $\epsilon_{i,j}=0$ because there is no edge between the nodes with bags $B_{i}$ and $B_j$.
\end{itemize}
Uniform weighted threshold graphs are denoted $G(p,\epsilon)$. Each such graph corresponds to the symmetric matrix $M(G)=(m_{kl})\in S(G)$ with the following entries:
 \begin{center}
         $m_{kl}=\begin{cases}
			p_{i}, & \mbox{if } k=l \mbox{ and }v_{k}\in B_{i}, \\
            \epsilon_{i} , & \mbox{if } v_{k}, v_{l}\in B_{i}, \textrm{ where }B_{i} \mbox{ is a $\otimes$-bag},\\
            \epsilon_{i,j}, & \mbox{if }i<j \mbox{, }  v_{k}\in B_{i} \mbox{, and }v_{l}\in B_{j}\mbox{, where $B_{j}$ is a $\otimes$-bag},\\
            0, &\mbox{ otherwise.}
		 \end{cases}$
   \end{center}
We will also write $\mbox{Spec}(G(p,\epsilon))$ or simply $\mbox{Spec}(G)$ to refer to $\mbox{Spec}(M)$, where $M\in S(G)$, when it is clear from the context.

\section{Spectral Reduction}
\label{sec:red}

The next result is due to Jones, Trevisan and Vinagre~\cite{jones2023exploring}, and it is a reinterpretation of a more general algorithm presented by Fritscher and Trevisan in \cite{fritscher2016exploring}. Its goal is to reduce a uniform weighted threshold graph $G(p,\epsilon)$ to a simpler uniform weighted threshold graph $H(p',\epsilon')$ such that both have the same spectrum. Even though the version in~\cite{jones2023exploring} was stated for the more general class of cographs, we particularize it to threshold graphs, which are the subject of this paper.

\begin{teore}
\label{thm:red}
Let $G$ be an $n$-vertex uniform weighted threshold graph and bag representation $\left\{ B_i \right\}_{i=1}^r$, with $|B_{i}|=a_{i}$, where for each bag $B_i$, the vertices have weight $p_i$, the internal edges have weight $\epsilon_i$ and external edges have weights $\epsilon_{i,j}$, $1 \leq i<j \leq r$. Then, $G$ and the graph $H$ have the same spectrum, where $H$ is defined as a disjoint union of:
\begin{enumerate}
  \item\label{red:a} A weighted threshold graph $H'$, whose bag representation is $\left\{ B_i' \right\}_{i=1}^r$, with $|B_{i}'|=1$,  with weights $p'$ and $\epsilon'$, where
\begin{equation}\nonumber
    \begin{aligned}
        p'_i & = & p_i+(a_i-1)\epsilon_i,\\
        \epsilon'_{i,j} & = & \sqrt{a_i\cdot a_j}\cdot\epsilon_{i,j}.
    \end{aligned}
\end{equation}
      Since all bags have size 1, there are no weights $\epsilon'_i$.
  \item \label{red:b} A collection of $n-r$ isolated vertices, such that, for each $i\in\{1,\ldots,r\}$, $a_i-1$ have weight $p_i - \epsilon_i$.
\end{enumerate}

\end{teore}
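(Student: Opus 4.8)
The plan is to think of the uniform weighted threshold graph $G(p,\epsilon)$ as a quotient-type structure and to diagonalize a large block of $M(G)$ by exploiting the symmetry within each bag. First I would fix, for each bag $B_i$ of size $a_i$, an orthonormal basis of $\mathbb{R}^{a_i}$ consisting of the normalized all-ones vector $\tfrac{1}{\sqrt{a_i}}\mathbf{1}$ together with $a_i-1$ vectors orthogonal to $\mathbf{1}$; stacking these across all bags gives an orthogonal matrix $Q$ on $\mathbb{R}^n$. The key observation is that $M(G)$ acts on each bag-block as $p_i I + \epsilon_i (J - I)$ on the diagonal blocks (for a $\otimes$-bag; $\epsilon_i=0$ for a $\cup$-bag) and as $\epsilon_{i,j} J$ on the off-diagonal blocks, and all of these are simultaneously ``block-circulant'' with respect to the $\mathbf{1}$ versus $\mathbf{1}^\perp$ decomposition. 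Conjugating $M(G)$ by $Q$ therefore produces a block-diagonal matrix.

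The main steps, in order, are: (1) record that for a $\otimes$-bag, $p_i I + \epsilon_i(J-I)$ has eigenvalue $p_i + (a_i-1)\epsilon_i$ on $\mathbf{1}$ and eigenvalue $p_i - \epsilon_i$ with multiplicity $a_i-1$ on $\mathbf{1}^\perp$, while for a $\cup$-bag it is just $p_i I$, so both formulas are subsumed by $p_i+(a_i-1)\epsilon_i$ and $p_i-\epsilon_i$ once we set $\epsilon_i=0$; (2) observe that each off-diagonal block $\epsilon_{i,j}J_{a_i\times a_j}$ maps $\mathbf{1}^\perp$ in $B_j$ to $0$ and maps $\tfrac{1}{\sqrt{a_j}}\mathbf{1}_{a_j}$ to $\epsilon_{i,j}\sqrt{a_j}\,\mathbf{1}_{a_i} = \epsilon_{i,j}\sqrt{a_i a_j}\cdot \tfrac{1}{\sqrt{a_i}}\mathbf{1}_{a_i}$, hence in the new basis the coupling between bags only survives on the ``$\mathbf{1}$-coordinates'' and has weight $\epsilon'_{i,j}=\sqrt{a_i a_j}\,\epsilon_{i,j}$; (3) conclude that $Q^T M(G) Q$ splits as the direct sum of an $r\times r$ matrix whose $(i,i)$ entry is $p_i+(a_i-1)\epsilon_i$ and whose $(i,j)$ entry is $\sqrt{a_ia_j}\,\epsilon_{i,j}$ — this is exactly $M(H')$, which one checks still has underlying graph $H'$ because $\epsilon'_{i,j}=0$ iff $\epsilon_{i,j}=0$ and the edge/non-edge pattern of $\otimes$/$\cup$-bags of size $1$ matches that of $G$ — plus a diagonal block contributing, for each $i$, the eigenvalue $p_i-\epsilon_i$ with multiplicity $a_i-1$, i.e. the $n-r$ isolated vertices described in item~\ref{red:b}; (4) since $Q$ is orthogonal, $\mathrm{Spec}(M(G))=\mathrm{Spec}(Q^TM(G)Q)=\mathrm{Spec}(M(H'))\uplus\{p_i-\epsilon_i : \text{with multiplicity } a_i-1\}=\mathrm{Spec}(H)$.

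The one genuinely delicate point — the part I would be most careful about — is bookkeeping the bag-block structure and verifying that the off-diagonal coupling really does decouple cleanly, in particular that a $\otimes$-bag $B_j$ is joined to \emph{every} earlier bag $B_i$ (this is where the threshold, rather than general cograph, structure is used, and it is what makes the single $r\times r$ reduced matrix have the stated simple form) and that no spurious edges are created or destroyed in $H'$. I would also note explicitly that the bag with the all-ones vector contributes a genuine vertex of $H'$ even when $a_i=1$ (in which case there is simply no $\mathbf{1}^\perp$ part and no isolated vertex from that bag), so the count $n-r$ of isolated vertices and the claim that $\sum_i (a_i-1)=n-r$ are consistent. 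Everything else is a routine computation with the matrices $I$, $J$ and $J-I$ restricted to the $\mathbf{1}/\mathbf{1}^\perp$ splitting, which I would not spell out in full.
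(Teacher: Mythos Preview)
Your proof is correct. The paper itself does not prove this theorem: it quotes the result from Jones, Trevisan and Vinagre (and ultimately Fritscher and Trevisan) and immediately proceeds to use it. Your argument---conjugating $M(G)$ by the block-orthogonal matrix $Q$ built from the $\mathbf{1}/\mathbf{1}^{\perp}$ splitting in each bag---is exactly the standard symmetry-reduction/equitable-partition computation underlying those references, so there is nothing to contrast.

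One small remark: your aside that ``this is where the threshold, rather than general cograph, structure is used'' is slightly off. The block-diagonalization you describe works verbatim for any symmetric matrix whose block structure is $p_iI+\epsilon_i(J-I)$ on the diagonal and $\epsilon_{i,j}J$ off the diagonal; the threshold hypothesis only dictates \emph{which} off-diagonal blocks are nonzero (namely those with $j$ even and $i<j$), and hence the shape of the reduced $r\times r$ matrix $M(H')$. The decoupling itself needs nothing beyond the constant-block structure, which is why the cited result is in fact stated for cographs. This does not affect the validity of your argument.
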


In the next example we apply Theorem \ref{thm:red} to the weighted threshold graph $G=(0^{a_{1}}$$1^{a_{2}}$$0^{a_{3}}$$1^{a_{4}}$$0^{a_{5}}$$1^{a_{6}})$ for some particular vertex and edge weights that will be useful in the remainder of the paper.

\begin{Example}
\label{example_reduction}
Consider the threshold graph $G=(0^{a_{1}}1^{a_{2}}0^{a_{3}}1^{a_{4}}0^{a_{5}}1^{a_{6}})$ for which the weight functions are defined as follows.
The vertex weights in each of the six bags are given by $$p_{1}=0, p_{2}=\frac{\lambda(a_{2}-1)}{a_{2}}, p_{3}=\lambda, p_{4}=\frac{\lambda}{a_{4}}, p_{5}=0 \textrm{ and }p_{6}=\frac{\lambda(a_{6}-1)}{a_{6}}.$$ The weights of the internal edges of the $\otimes$-bags are given by $$\epsilon_{2}=\frac{-\lambda}{a_{2}}, \epsilon_{4}=\frac{\lambda}{a_{4}}, \epsilon_{6}=\frac{-\lambda}{a_{6}}.$$ For the $\cup$-bags, we assume $\epsilon_{1}=0$, $\epsilon_{3}=0$ and $\epsilon_{5}=0$. Finally, for the edge-weights between bags, we have $\epsilon_{1,2}=\frac{\lambda}{\sqrt{a_{1}a_{2}}}$, $\epsilon_{1,4}=\frac{-\lambda}{2}\cdot\frac{1}{\sqrt{a_{1}a_{4}}}$,  $\epsilon_{2,4}=\frac{-\lambda}{2}\cdot\frac{1}{\sqrt{a_{2}a_{4}}}$,  $\epsilon_{3,4}=\frac{-\lambda\sqrt{2}}{2}\cdot\frac{1}{\sqrt{a_{3}a_{4}}}$,  $\epsilon_{1,6}=\frac{\lambda}{4}\cdot\frac{1}{\sqrt{a_{1}a_{6}}}$,  $\epsilon_{2,6}=\frac{\lambda}{4}\cdot\frac{1}{\sqrt{a_{2}a_{6}}}$,  $\epsilon_{3,6}=\frac{\lambda\sqrt{2}}{4}\cdot\frac{1}{\sqrt{a_{3}a_{6}}}$,  $\epsilon_{4,6}=\frac{\lambda}{2}\cdot\frac{1}{\sqrt{a_{4}a_{6}}}$ and  $\epsilon_{5,6}=\frac{\lambda\sqrt{2}}{2}\cdot\frac{1}{\sqrt{a_{5}a_{6}}}$.

We now apply Theorem \ref{thm:red} to the uniform weighted threshold graph $G=G(p,\epsilon)$  to obtain a weighted threshold graph $H$ with the same eigenvalues as $G$.

For $i\in\{1,5\}$ the $a_{i}-1$ isolated vertices have weight $p_{i}-\epsilon_{i}=0$, and the remaining vertex in each $B_i$ produces a bag $B'_{i}$ with weight $p_{i}'=p_{i}+(a_{i}-1)\epsilon_{i}=0$.
For $i\in\{2,6\}$ the $a_{i}-1$ isolated vertices have weight $p_{i}-\epsilon_{i}=\frac{\lambda(a_{i}-1)}{a_{i}}-\frac{-\lambda}{a_{i}}=\lambda$, and the remaining vertex in each $B_{i}$ produces a bag  $B'_{i}$ with weight $p_{i}'=p_{i}+(a_{i}-1)\epsilon_{i}=p_{i}+(a_{i}-1)\epsilon_{i}=\frac{\lambda(a_{i}-1)}{a_{i}}+(a_{i}-1)\frac{-\lambda}{a_{i}}=0$.
For $i=3$ the $a_{3}-1$ isolated vertices have weight $p_{3}-\epsilon_{3}=\lambda$, and the remaining vertex in  $B_3$ produces a bag $B'_{3}$ with weight $p_{3}'=p_{3}+(a_{3}-1)\epsilon_{3}=\lambda$.
Finally, for $i=4$ the $a_{4}-1$ isolated vertices have weight $p_{4}-\epsilon_{4}=\frac{\lambda}{a_{4}}-\frac{\lambda}{a_{4}}=0$, and the remaining vertex in  $B_4$ produces a bag $B'_{4}$ with weight $p_{4}'=p_{4}+(a_{4}-1)\epsilon_{4}=\frac{\lambda}{a_{4}}+(a_{4}-1)\frac{\lambda}{a_{4}}=\lambda$.

And the new weights of the edges between the bags are given by $\epsilon_{i,j}'=\sqrt{a_{i}a_{j}}\epsilon_{i,j}$ for $1\leq i<j\leq 6$, which leads to 
$\epsilon_{1,2}'=\lambda$, $\epsilon_{1,4}'=\frac{-\lambda}{2}$,  $\epsilon_{2,4}'=\frac{-\lambda}{2}$,  $\epsilon_{3,4}'=\frac{-\lambda\sqrt{2}}{2}$,  $\epsilon_{1,6}'=\frac{\lambda}{4}$,  $\epsilon_{2,6}'=\frac{\lambda}{4}$,  $\epsilon_{3,6}'=\frac{\lambda\sqrt{2}}{4}$,  $\epsilon_{4,6}'=\frac{\lambda}{2}$ and  $\epsilon_{5,6}'=\frac{\lambda\sqrt{2}}{2}$.

Therefore, after applying Theorem \ref{thm:red}, one obtains a weighted threshold graph that is a disjoint union of:
\begin{enumerate}
    \item  a weighted threshold graph $H'$ with binary sequence $(010101)$. Each bag $B'_i$ 
has a single vertex with weight $p'_{i}$ for $1\leq i\leq 6$. The edge weights between bags of $H'$ are given by $\epsilon'_{i,j}$.

     \item $a_{1}+a_{4}+a_{5}-3$ isolated vertices of weight $0$ and $a_{2}+a_{3}+a_{6}-3$ isolated vertices of weight $\lambda$.
\end{enumerate}
Since Theorem \ref{thm:red} ensures that the eigenvalues of $G$ and $H'$ are the same, we have
$$\mbox{Spec}(G)=\mbox{Spec}(H')\cup\{0^{(a_{1}+a_{4}+a_{5}-3)},\lambda^{(a_{2}+a_{3}+a_{6}-3)}\}.$$

\end{Example}

Next we apply Theorem \ref{thm:red} to a uniform weighted threshold graph $G(p,\epsilon)$ that generalizes the graph in Example~\ref{example_reduction}. The choice of the weights produces a reduced graph for which all isolated vertices are assigned one of two distinct weights.

\begin{coro}
    \label{reduced_graph}
Let $G=G(p,\epsilon)$ be a uniform weighted threshold graph with binary representation given by $b= (0^{a_{1}}1^{a_{2}}0^{a_{3}}1^{a_{4}}\cdots b_{r}^{a_{r}})$ with $a_{i}\geq 1$ for $1\leq i\leq r$. The weights $\epsilon_{i,j}$ are arbitrary, but vertex weights are given by

\begin{equation}\nonumber
    \begin{cases}
         p_{i}=0 \mbox{ and } \epsilon_{i}=0 & \mbox{if } i \equiv 1 \,(\bmod~ 4)\\
         p_{i}=\frac{\lambda(a_{i}-1)}{a_{i}} \mbox{ and } \epsilon_{i}=-\frac{\lambda}{a_{i}} & \mbox{if }  i \equiv 2 \,(\bmod~ 4)\\
        p_{i}=\lambda \mbox{ and } \epsilon_{i}=0 & \mbox{if } i \equiv 3 \,(\bmod~ 4) \\
         p_{i}=\frac{\lambda}{a_{i}} \mbox{ and } \epsilon_{i}=\frac{\lambda}{a_{i}} & \mbox{if } i \equiv 0 \,(\bmod~ 4)
    \end{cases}
\end{equation}
Then $G$ and the weighted graph $H$ have the same spectrum, where $H$ is the union of a uniform weighted graph $T_1$ and isolated vertices. Among the isolated vertices, 
$$\displaystyle{ \sum_{\substack{ 1 \leq i \leq n \\ i \equiv 1 \,(\bmod~ 4) }} (a_{i}-1)+\sum_{\substack{ 1 \leq i \leq n \\ i \equiv 0 \,(\bmod~ 4)  }} (a_{i}-1)}$$
have weight $0$ and
$$\displaystyle{ \sum_{\substack{ 1 \leq i \leq n \\ i \equiv 2 \,(\bmod~ 4)  }} (a_{i}-1)+\sum_{\substack{ 1 \leq i \leq n \\ i \equiv 3 \,(\bmod~ 4)  }} (a_{i}-1)}$$ have weight $\lambda$.
The uniform weighted threshold graph $T_1$ has bag representation $\{B^{(1)}_{i}\}_{i=1}^{r}$, with $|B^{(1)}_{i}|=1$, where the edges between bags of $T_{1}$ have weight $\epsilon^{(1)}_{i,j}=\epsilon_{i,j}\sqrt{a_{i}a_{j}}$. Moreover, each bag $B^{(1)}_{i}$ consists of a single vertex $v^{(1)}_i$ with weight $p^{(1)}_{i}$
given by:
\begin{equation}
\begin{cases}
    p^{(1)}_{i}=0 \mbox{ if }i \equiv j \,(\bmod~ 4)\mbox{ for }j\in\{1,2\} ;\\
    p^{(1)}_{i}=\lambda\mbox{ if }i \equiv j \,(\bmod~ 4)\mbox{ for }j\in\{0,3\}.
\end{cases}
\end{equation}
\end{coro}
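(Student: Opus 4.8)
The plan is to derive Corollary~\ref{reduced_graph} as a direct application of Theorem~\ref{thm:red}, with the bulk of the work being a bookkeeping exercise modulo $4$. The weights $p_i,\epsilon_i$ in the statement are obtained from the four residue classes of $i$ mod $4$; these four cases are exactly (up to the $0^{a_i}$ vs.\ $1^{a_i}$ alternation being compatible with parity) the six bags' worth of weights appearing in Example~\ref{example_reduction}, so the computation to be performed is genuinely the same one done there, now carried out uniformly in $i$ rather than for $i\in\{1,\dots,6\}$.

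First I would invoke Theorem~\ref{thm:red} verbatim: $G$ has the same spectrum as the disjoint union of the single-vertex-per-bag graph $H'$ with $p'_i=p_i+(a_i-1)\epsilon_i$, $\epsilon'_{i,j}=\sqrt{a_ia_j}\,\epsilon_{i,j}$, together with, for each $i$, exactly $a_i-1$ isolated vertices of weight $p_i-\epsilon_i$. Set $T_1:=H'$ and $p^{(1)}_i:=p'_i$, $\epsilon^{(1)}_{i,j}:=\epsilon'_{i,j}=\epsilon_{i,j}\sqrt{a_ia_j}$, matching the notation of the statement; the edge-weight formula is then immediate and needs no further argument. It remains to evaluate $p'_i$ and $p_i-\epsilon_i$ in each of the four residue classes.

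Next I would run the four cases. For $i\equiv 1$: $p_i=\epsilon_i=0$, so $p_i-\epsilon_i=0$ and $p^{(1)}_i=0$. For $i\equiv 2$: $p_i=\frac{\lambda(a_i-1)}{a_i}$, $\epsilon_i=-\frac{\lambda}{a_i}$, so $p_i-\epsilon_i=\frac{\lambda(a_i-1)}{a_i}+\frac{\lambda}{a_i}=\lambda$, while $p^{(1)}_i=\frac{\lambda(a_i-1)}{a_i}-(a_i-1)\frac{\lambda}{a_i}=0$. For $i\equiv 3$: $p_i=\lambda$, $\epsilon_i=0$, so $p_i-\epsilon_i=\lambda$ and $p^{(1)}_i=\lambda$. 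For $i\equiv 0$: $p_i=\epsilon_i=\frac{\lambda}{a_i}$, so $p_i-\epsilon_i=0$, while $p^{(1)}_i=\frac{\lambda}{a_i}+(a_i-1)\frac{\lambda}{a_i}=\lambda$. This establishes the claimed values $p^{(1)}_i=0$ for $i\equiv 1,2$ and $p^{(1)}_i=\lambda$ for $i\equiv 0,3$, and simultaneously shows the isolated vertices arising from bags with $i\equiv 1$ or $i\equiv 0$ have weight $0$ while those from bags with $i\equiv 2$ or $i\equiv 3$ have weight $\lambda$. Summing $a_i-1$ over the appropriate residue classes gives the two displayed counts, and one should note that the index ranges in the displays should read $1\le i\le r$ (the ``$n$'' there is a typo for $r$, the number of bags). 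Finally, $T_1$ is a uniform weighted threshold graph because $H'$ inherits the bag structure of $G$ with all bag sizes equal to $1$, so conditions (a)--(c) in the definition hold trivially (there are no internal edges to constrain).

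I do not expect a genuine obstacle here: the statement is a specialization of Theorem~\ref{thm:red} obtained by plugging in prescribed weights, and every nontrivial step is one of the four elementary identities above. The only mild care needed is to make sure the prescribed vertex/edge weights are actually \emph{admissible}, i.e.\ that $\epsilon_i\neq 0$ on $\otimes$-bags and $\epsilon_i=0$ on $\cup$-bags as required by the definition of a uniform weighted threshold graph: a $\otimes$-bag $B_i$ occurs exactly when $i$ is even, i.e.\ $i\equiv 0$ or $i\equiv 2\pmod 4$, and in both of those cases $\epsilon_i=\pm\lambda/a_i\neq 0$ since $\lambda\neq 0$; a $\cup$-bag occurs when $i$ is odd, and then indeed $\epsilon_i=0$. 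With that consistency check in place, the corollary follows.
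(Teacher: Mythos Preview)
Your proposal is correct and follows essentially the same route as the paper's own proof: apply Theorem~\ref{thm:red} and then evaluate $p_i-\epsilon_i$ and $p_i+(a_i-1)\epsilon_i$ case by case according to $i\bmod 4$. Your additional admissibility check (that $\epsilon_i\neq 0$ precisely on $\otimes$-bags) and your observation about the typo $n\mapsto r$ are sound but not present in the paper's argument.
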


Note that the weighted graph $G$ in Example~\ref{example_reduction} has weights as in the statement of Corollary~\ref{reduced_graph} for $r=6$, so that the weighted graph $H'$ computed in the example is precisely the weighted graph $T_1$ produced by the corollary.

\begin{proof}
We apply Theorem \ref{thm:red} to the uniform weighted threshold graph $G(p,\epsilon)$ with bag representation $\{B_i\}_{i=1}^r$, where $|B_i|=a_i$. This produces a uniform weighted threshold graph $H$ that is the union of isolated vertices and a weighted graph $T_1$ and that has the same spectrum as $G$. Regarding the isolated vertices,
for each bag $B_i$ we remove $a_{i}-1$ vertices and each one is assigned the following weight: 
\begin{equation*}
p_i-\epsilon_i=\begin{cases}
0- 0=0,
\textrm{ if }i\equiv 1~(\bmod~4),\\ 
\frac{\lambda(a_i-1)}{a_i}- \left(-\frac{\lambda}{a_i}\right)=\lambda, \textrm{ if }i\equiv 2~(\bmod~4),\\  
\lambda- 0=\lambda,
\textrm{ if }i\equiv 3~(\bmod~4),\\
\frac{\lambda}{a_i}- \frac{\lambda}{a_i}=0, \textrm{ if }i\equiv 0~(\bmod~4).
\end{cases}
\end{equation*}
Thus, there are
$$\displaystyle{ \sum_{\substack{ 1 \leq i \leq n \\    i\equiv 1~(\bmod~4)  }} (a_{i}-1)+\sum_{\substack{ 1 \leq i \leq n \\ i\equiv 0~(\bmod~4)  }} (a_{i}-1)}$$
isolated vertices of weight $0$ and
$$\displaystyle{ \sum_{\substack{ 1 \leq i \leq n \\ i\equiv 2~(\bmod~4)  }} (a_{i}-1)+\sum_{\substack{ 1 \leq i \leq n \\ i\equiv 3~(\bmod~4)  }} (a_{i}-1)}$$ isolated vertices of weight $\lambda$.

Consider the remaining weighted threshold graph $T_1$ with bag representation $\{B^{(1)}_{i}\}_{i=1}^{r}$, with $|B^{(1)}_{i}|=1$. Notice that the edges between bags of $T_{1}$ have weight $\epsilon^{(1)}_{i,j}=\epsilon_{i,j}\sqrt{a_{i}a_{j}}$. 

For the vertex $v^{(1)}_{i}$ in $B^{(1)}_{i}$, Theorem \ref{thm:red} establishes that
\begin{equation*}p_i^{(1)}=
p_i+(a_i-1)\epsilon_i=\begin{cases}
0+(a_i-1)\cdot 0=0,
\textrm{ if }i\equiv 1~(\bmod~4),\\ 
\frac{\lambda(a_i-1)}{a_i}+(a_i-1)\cdot \left(-\frac{\lambda}{a_i}\right)=0, \textrm{ if }i\equiv 2~(\bmod~4),\\  
\lambda+(a_i-1)\cdot 0=\lambda,
\textrm{ if }i\equiv 3~(\bmod~4),\\
\frac{\lambda}{a_i}+(a_i-1)\cdot \frac{\lambda}{a_i}=\lambda, \textrm{ if }i\equiv 0~(\bmod~4).
\end{cases}
\end{equation*}
Hence, the weights of the vertices of $T_{1}$ are 
\begin{equation}
\begin{cases}
    p^{(1)}_{i}=0\mbox{ if }i\mbox{ mod $4$}\in\{1,2\} ,\\
    p^{(1)}_{i}=\lambda\mbox{ if }i\mbox{ mod $4$}\in\{0,3\}.
\end{cases}
\end{equation}
\end{proof}

Corollary \ref{reduced_graph} shows that after applying a reduction in a uniform weighted threshold graph $G(p,\epsilon)$ then one will continue to work with a simpler weighted graph denoted by $T_{1}$ in order to compute the spectrum of $G$.

The next result shows that if the weights of a weighted threshold graph $T_{k}$ satisfy some additional constraints, then we can reduce it to a simpler threshold graph with one less bag. 
\begin{lemat}
\label{key}
    Let $T_{k}$ be a weighted threshold graph with bag representation given by $\{B_{i}^{(k)}\}_{i=k}^{r}$, with $|B_{i}^{(k)}|=1$, where $B_{k}^{(k)}$ is a $\otimes$-bag if $k$ is even and a $\cup$-bag if $k$ is odd. If $\epsilon^{(k)}_{k,j}=\epsilon^{(k)}_{k+1,j}$ for all $k+1<j\leq r$ such that $j$ even, and $p^{(k)}_{k}=p^{(k)}_{k+1}$, then
    \begin{equation}\label{eq:spec}
    \Spec(T_{k})=\Spec(T_{k+1})\cup\{p^{(k)}_{k}-\epsilon^{(k)}_{k,k+1}\}.
    \end{equation}
Here $T_{k+1}$ is a weighted threshold graph with bag representation given by $\{B_{i}^{(k+1)}\}_{i=k+1}^{r}$, with $|B_{i}^{(k+1)}|=1$ and weights defined as follows
        \begin{itemize}
            \item[(a)] $p_{i}^{(k+1)}=p_{i}^{(k)}$ for $k+2\leq i\leq r$;
            \item[(b)] $p_{k+1}^{(k+1)}=p_{k}^{(k)}+\epsilon^{(k)}_{k,k+1}$;
            \item[(c)] $\epsilon_{i,j}^{(k+1)}=\epsilon_{i,j}^{(k)}$ for $k+1<i<j\leq r$, $j$ even;
            \item[(d)] $\epsilon_{k+1,j}^{(k+1)}=\sqrt{2}\epsilon_{k+1,j}^{(k)}$ for $k+1<j\leq r$, $j$ even.
        \end{itemize}
\end{lemat}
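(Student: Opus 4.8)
The plan is to realize \eqref{eq:spec} via an orthogonal change of basis: I will exhibit an orthogonal matrix $Q$ that conjugates $M:=M(T_k)$ into a block‑diagonal matrix whose blocks are the $1\times 1$ matrix $\bigl[p^{(k)}_k-\epsilon^{(k)}_{k,k+1}\bigr]$ and the matrix $M(T_{k+1})$. Since orthogonal similarity preserves the spectrum, this gives $\Spec(M)=\{p^{(k)}_k-\epsilon^{(k)}_{k,k+1}\}\cup\Spec(M(T_{k+1}))$, which is exactly \eqref{eq:spec}. This is the standard "symmetric‑vertex'' (twin) reduction, and the hypotheses of the lemma are precisely what is needed to make $v^{(k)}_k$ and $v^{(k)}_{k+1}$ behave like twins.

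First I would record the structure of $M$. Since every bag of $T_k$ is a singleton, $M$ is indexed by $v^{(k)}_k,v^{(k)}_{k+1},\dots,v^{(k)}_r$, with diagonal entry $p^{(k)}_i$ at $v^{(k)}_i$ and off‑diagonal entry $\epsilon^{(k)}_{i,j}$ at $(v^{(k)}_i,v^{(k)}_j)$, $i<j$, exactly when $B^{(k)}_j$ is a $\otimes$‑bag (i.e.\ $j$ even), and $0$ otherwise. The hypotheses $p^{(k)}_k=p^{(k)}_{k+1}=:p$ and $\epsilon^{(k)}_{k,j}=\epsilon^{(k)}_{k+1,j}$ for even $j>k+1$, together with the convention $\epsilon_{i,j}=0$ for $\cup$‑bags (which covers odd $j>k+1$), say precisely that $v^{(k)}_k$ and $v^{(k)}_{k+1}$ have the same diagonal value and the same vector of edge weights to $\{v^{(k)}_{k+2},\dots,v^{(k)}_r\}$; the only edge possibly joining them has weight $\epsilon:=\epsilon^{(k)}_{k,k+1}$ (which is $0$ when $k$ is even, since then $B^{(k)}_{k+1}$ is a $\cup$‑bag). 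Ordering $v^{(k)}_k,v^{(k)}_{k+1}$ first, I can therefore write
$$M=\begin{pmatrix} p & \epsilon & w^{T}\\ \epsilon & p & w^{T}\\ w & w & N\end{pmatrix},$$
where $w$ is the common connection vector and $N$ is $M$ restricted to $\{v^{(k)}_{k+2},\dots,v^{(k)}_r\}$.

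Next I would conjugate by the orthogonal (symmetric, involutive) matrix $Q$ that acts as $\tfrac{1}{\sqrt2}\begin{pmatrix}1&1\\1&-1\end{pmatrix}$ on the coordinates of $v^{(k)}_k,v^{(k)}_{k+1}$ and as the identity elsewhere. A short computation gives
$$Q^{T}MQ=\begin{pmatrix} p+\epsilon & 0 & \sqrt{2}\,w^{T}\\ 0 & p-\epsilon & 0\\ \sqrt{2}\,w & 0 & N\end{pmatrix},$$
so the second row and column form an isolated $1\times1$ block contributing the eigenvalue $p-\epsilon=p^{(k)}_k-\epsilon^{(k)}_{k,k+1}$, and the complementary principal submatrix is $\begin{pmatrix} p+\epsilon & \sqrt{2}\,w^{T}\\ \sqrt{2}\,w & N\end{pmatrix}$. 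It then remains to check that this last matrix is literally $M(T_{k+1})$: its $v^{(k+1)}_{k+1}$‑diagonal entry is $p+\epsilon=p^{(k)}_k+\epsilon^{(k)}_{k,k+1}=p^{(k+1)}_{k+1}$ (clause (b)); its other diagonal entries are the $p^{(k)}_i=p^{(k+1)}_i$, $i\ge k+2$ (clause (a)); the block $N$ reproduces the $\epsilon^{(k)}_{i,j}=\epsilon^{(k+1)}_{i,j}$, $k+1<i<j$ (clause (c)); and the entries of $\sqrt2\,w$ reproduce $\sqrt2\,\epsilon^{(k)}_{k+1,j}=\epsilon^{(k+1)}_{k+1,j}$ for even $j>k+1$ and $0$ otherwise (clause (d)), while $B^{(k+1)}_{k+1}$ keeps the type it had in $T_k$, so the underlying threshold structure is the one described in the statement. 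This last bookkeeping — matching the $\sqrt2$ scaling and the $\cup/\otimes$‑bag conventions so that the reduced block is exactly $M(T_{k+1})$ rather than merely a matrix with the same spectrum — is the only delicate point; the rest is the routine $2\times2$ rotation computation.
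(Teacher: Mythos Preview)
Your proof is correct and the underlying idea is the same twin-vertex reduction the paper uses, but the packaging differs. The paper first observes that the hypotheses allow the leaf $v^{(k)}_k$ to be \emph{relocated} into the bag $B^{(k)}_{k+1}$ (so that this bag now has two vertices with identical weight and identical external connections), and then invokes the general bag-reduction Theorem~\ref{thm:red} of Jones, Trevisan and Vinagre as a black box to split off the isolated vertex of weight $p^{(k)}_k-\epsilon^{(k)}_{k,k+1}$ and produce $T_{k+1}$. You instead write down the $2\times 2$ Hadamard rotation $Q$ explicitly and check the block decomposition by hand. Your route is more self-contained and elementary, since it does not rely on Theorem~\ref{thm:red}; the paper's route has the advantage of exhibiting the lemma as a direct special case of that general machinery. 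The actual linear algebra being performed is identical in both.
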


\begin{proof}
Since $\epsilon^{(k)}_{k,j}=\epsilon^{(k)}_{k+1,j}$ for all $k+1<j\leq r$, $j$ even, and $p^{(k)}_{k}=p^{(k)}_{k+1}$ then we can remove the internal node associated with $B^{(k)}_k$ and relocate its leaf $v^{(k)}_{k}$ to the bag of its parent. This new uniform weighted threshold graph $G=G(p',\epsilon')$ has internal nodes labelled by $k+1,\ldots,r$ where the remaining bags are the same as in $T_k$, except for $B'_{k+1}=\{v_k^{(k)},v_{k+1}^{(k)}\}$. The remaining weights are also the same, except in the case where $k+1$ is even (so that $B'_{k+1}$ has type $\otimes$), when we set $\epsilon_{k+1}'=\epsilon_{k,k+1}^{(k)}$.  This relocation is depicted in Figure \ref{fig_lemma12} with $k$ odd. Note that the relocation does not change the weighted threshold graph, just its bag representation, so that $\mbox{Spec}(T_{k})=\mbox{Spec}(G)$. 

\begin{figure}[H]
\centering
\begin{tikzpicture}[scale=0.8]
\path(0,0)node[shape=circle,draw,label=below:$B^{(k)}_{k}$,inner sep=0] (bag3) {$\cup$}
      ( -1,0)node[shape=circle,draw,fill=black,inner sep=0pt,minimum size=4pt,label=left:$p^{(k)}_{k}$] (bag22) {}
      
    (3,1)node[shape=circle,draw,label=below:$B^{(k)}_{k+1}$,inner sep=0] (bag4) {$\otimes$}
    ( 4,1)node[shape=circle,draw,fill=black,inner sep=0pt,minimum size=4pt,label=right:$p^{(k)}_{k+1}$] (bag42) {}
   
    (0,3)node[shape=circle,draw,label=below:$B^{(k)}_{k+2}$,inner sep=0] (bag5) {$\cup$}
     ( -1,3)node[shape=circle,draw,fill=black,inner sep=0pt,minimum size=4pt,label=left:$p^{(k)}_{k+2}$] (bag52) {}
    
    (3,4)node[shape=circle,draw,label=above:$B^{(k)}_{k+3}$,inner sep=0] (bag6) {$\otimes$}
    ( 4,4)node[shape=circle,draw,fill=black,inner sep=0pt,minimum size=4pt,label=right:$p^{(k)}_{k+3}$] (bag62) {}
(10,1)node[shape=circle,draw,label=left:$B^{(k)}_{k+1}$,inner sep=0] (nbag4) {$\otimes$}
    ( 11,1)node[shape=circle,draw,fill=black,inner sep=0pt,minimum size=4pt,label=right:$p^{(k)}_{k+1}$] (nbag42) {}
    ( 10,0)node[shape=circle,draw,fill=black,inner sep=0pt,minimum size=4pt,label=left:$p^{(k)}_{k}$] (nbag43) {}
   
    (8,3)node[shape=circle,draw,label=below:$B^{(k)}_{k+2}$,inner sep=0] (nbag5) {$\cup$}
     ( 7,3)node[shape=circle,draw,fill=black,inner sep=0pt,minimum size=4pt,label=left:$p^{(k)}_{k+2}$] (nbag52) {}
    
    (10,4)node[shape=circle,draw,label=above:$B^{(k)}_{k+3}$,inner sep=0] (nbag6) {$\otimes$}
    ( 11,4)node[shape=circle,draw,fill=black,inner sep=0pt,minimum size=4pt,label=right:$p^{(k)}_{k+3}$] (nbag62) {};

      \draw[->] (5,2) -- (5.5,2);
         
      \draw[-](nbag4)--(nbag42);
       \draw[-](nbag4)--node[right] {$\epsilon^{(k)}_{k,k+1}$}(nbag43);
     
      \draw[-](nbag5)--(nbag52);
    
      \draw[-](nbag6)--(nbag62);

    
      
      \draw[-](nbag6)--node[right] {$\epsilon^{(k)}_{k+1,k+3}$}(nbag4);
      
      \draw[-](nbag6)--node[above,sloped] {$\epsilon^{(k)}_{k+2,k+3}$}(nbag5);

       \draw [dotted] (nbag6) -- (8.5,5);
    

      \draw[-](bag3)--(bag22);

      \draw[-](bag4)--(bag42);
     
      \draw[-](bag5)--(bag52);
    
      \draw[-](bag6)--(bag62);
      \draw[-](bag4)--node[below,sloped] {$\epsilon^{(k)}_{k,k+1}$}(bag3);
      \draw[-](bag6)--node[above,sloped] {$\epsilon^{(k)}_{k,k+3}$}(bag3);
      
      \draw[-](bag6)--node[right] {$\epsilon^{(k)}_{k+1,k+3}$}(bag4);
      
      \draw[-](bag6)--node[above,sloped] {$\epsilon^{(k)}_{k+2,k+3}$}(bag5);

       \draw [dotted] (bag6) -- (1.5,5);

\end{tikzpicture}
\caption{\label{fig_lemma12} Vertex $v^{(k)}_{k}$ relocated to the bag $B^{(k)}_{k+1}$.}
\end{figure}
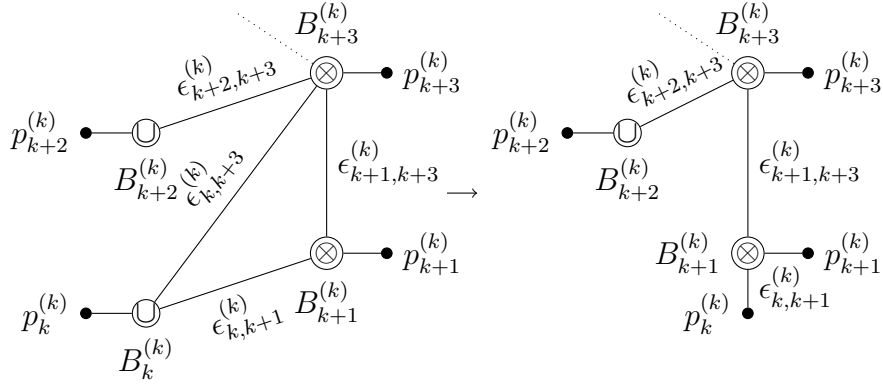

Now we apply Theorem \ref{thm:red} to $G$. Since the only bag with more than one leaf is the bag $B'_{k+1}$ we have that $G$ and the graph $H$ (Figure \ref{fig_lamma22}) have the same spectrum where $H$ is the union of an isolated vertex with weight $p'_k-\epsilon'_k=p^{(k)}_{k}-\epsilon^{(k)}_{k,k+1}$ and a weighted threshold graph $T_{k+1}$ described at the statement.

\begin{figure}[H]
\label{lema_relocating}
\centering
\begin{tikzpicture}[scale=0.8]

\path(0,1)node[shape=circle,draw,label=left:$B^{(k)}_{k+1}$,inner sep=0] (nbag4) {$\otimes$}
    ( 1,1)node[shape=circle,draw,fill=black,inner sep=0pt,minimum size=4pt,label=right:$p^{(k)}_{k+1}$] (nbag42) {}
    ( 0,0)node[shape=circle,draw,fill=black,inner sep=0pt,minimum size=4pt,label=left:$p^{(k)}_{k}$] (nbag43) {}
   
    (-2,3)node[shape=circle,draw,label=below:$B^{(k)}_{k+2}$,inner sep=0] (nbag5) {$\cup$}
     ( -3,3)node[shape=circle,draw,fill=black,inner sep=0pt,minimum size=4pt,label=left:$p^{(k)}_{k+2}$] (nbag52) {}
    
    (0,4)node[shape=circle,draw,label=above:$B^{(k)}_{k+3}$,inner sep=0] (nbag6) {$\otimes$}
    ( 1,4)node[shape=circle,draw,fill=black,inner sep=0pt,minimum size=4pt,label=right:$p^{(k)}_{k+3}$] (nbag62) {}

(7,1)node[shape=circle,draw,label=left:$B^{(k+1)}_{k+1}$,inner sep=0] (nnbag4) {$\otimes$}
    ( 8,1)node[shape=circle,draw,fill=black,inner sep=0pt,minimum size=4pt,label=right:$p^{(k+1)}_{k+1}$] (nnbag42) {}
    ( 7,0)node[shape=circle,draw,fill=black,inner sep=0pt,minimum size=4pt,label=left:$p^{(k)}_{k}-\epsilon^{(k)}_{k,k+1}$] (nnbag43) {}
   
    (5,3)node[shape=circle,draw,label=below:$B^{(k+1)}_{k+2}$,inner sep=0] (nnbag5) {$\cup$}
     ( 4,3)node[shape=circle,draw,fill=black,inner sep=0pt,minimum size=4pt,label=left:$p^{(k+1)}_{k+2}$] (nnbag52) {}
    
    (7,4)node[shape=circle,draw,label=above:$B^{(k+1)}_{k+3}$,inner sep=0] (nnbag6) {$\otimes$}
    ( 8,4)node[shape=circle,draw,fill=black,inner sep=0pt,minimum size=4pt,label=right:$p^{(k+1)}_{k+3}$] (nnbag62) {};

      \draw[-](nnbag4)--(nnbag42);
     
      \draw[-](nnbag5)--(nnbag52);
    
      \draw[-](nnbag6)--(nnbag62);
      
      \draw[-](nnbag6)--node[right] {$\epsilon^{(k+1)}_{k+1,k+3}$}(nnbag4);
      
      \draw[-](nnbag6)--node[above,sloped] {$\epsilon^{(k+1)}_{k+2,k+3}$}(nnbag5);

       \draw [dotted] (nnbag6) -- (5.5,5);


      \draw[->] (2,2) -- (2.5,2);
         
      \draw[-](nbag4)--(nbag42);
       \draw[-](nbag4)--node[right] {$\epsilon^{(k)}_{k,k+1}$}(nbag43);
     
      \draw[-](nbag5)--(nbag52);
    
      \draw[-](nbag6)--(nbag62);
      
      \draw[-](nbag6)--node[right] {$\epsilon^{(k)}_{k+1,k+3}$}(nbag4);
      
      \draw[-](nbag6)--node[above,sloped] {$\epsilon^{(k)}_{k+2,k+3}$}(nbag5);

       \draw [dotted] (nbag6) -- (-1.5,5);
    

\end{tikzpicture}
\caption{\label{fig_lamma22} Vertex $v^{(k)}_{k}$ removed and the remaining threshold graph $T_{k+1}$.}
\end{figure}
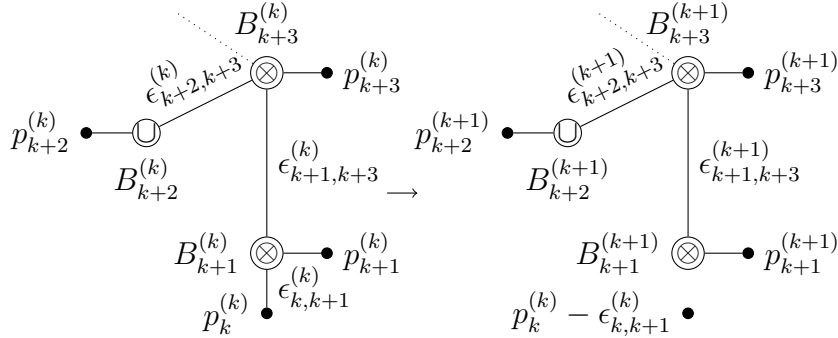

\end{proof}

\begin{Example}

\label{example_smalltree}

In this example we will compute the spectrum of the weighted threshold graph $H'$ computed in Example \ref{example_reduction} and depicted in Figure \ref{secT1}. 

Using the notation defined in Lemma \ref{key}, we set $T_1=H'$, meaning that, $B_{i}^{(1)}=B_{i}'$ and $p_{i}^{(1)}=p_{i}'$ for $1\leq i\leq 6$, and $\epsilon_{i,j}^{(1)}=\epsilon_{i,j}'$ for $1\leq i<j\leq 6$, where $j$ is even. Therefore, according to Example  \ref{example_reduction}, the bag $B^{(1)}_{i}$ has a vertex $v^{(1)}_{i}$ with weight $p^{(1)}_{i}=0$ for $i\in\{1,2,5,6\}$ and $p^{(1)}_{i}=\lambda$ for $i\in\{3,4\}$.  
The weights of the edges between the bags are 
$\epsilon_{1,2}^{(1)}=\lambda$, $\epsilon_{1,4}^{(1)}=\frac{-\lambda}{2}$,  $\epsilon_{2,4}^{(1)}=\frac{-\lambda}{2}$,  $\epsilon_{3,4}^{(1)}=\frac{-\lambda\sqrt{2}}{2}$,  $\epsilon_{1,6}^{(1)}=\frac{\lambda}{4}$,  $\epsilon_{2,6}^{(1)}=\frac{\lambda}{4}$,  $\epsilon_{3,6}^{(1)}=\frac{\lambda\sqrt{2}}{4}$,  $\epsilon_{4,6}^{(1)}=\frac{\lambda}{2}$ and  $\epsilon_{5,6}^{(1)}=\frac{\lambda\sqrt{2}}{2}$.

Since the weights in the weighted threshold graph $T_{1}$ satisfy
\begin{equation}
\label{example_rest_1}
         \begin{aligned}
           p^{(1)}_{1}&=p^{(1)}_{2}&=0,~
          \epsilon^{(1)}_{1,4}=\epsilon^{(1)}_{2,4}=\frac{-\lambda}{2}, \textrm{ and }
          \epsilon^{(1)}_{1,6}=\epsilon^{(1)}_{2,6}=\frac{\lambda}{4},
         \end{aligned}
    \end{equation}
Lemma \ref{key} may be applied. Equation~\eqref{eq:spec} tells us that $\Spec(T_1)$ is the union of $\Spec(T_2)$, where $T_2$ is a smaller weighted threshold graph, and the value
$p^{(1)}_{1}-\epsilon^{(1)}_{1,2}=0-\lambda=-\lambda$. 




The weighted threshold graph $T_{2}$ is depicted in Figure \ref{secT2}. By part (c) the weights of the edges incident with the bag $B^{(2)}_{2}$ are
    \begin{equation}
         \begin{aligned}
          \epsilon^{(2)}_{2,4}&=\sqrt{2}\epsilon^{(1)}_{2,4}&=\frac{-\lambda\sqrt{2}}{2} \textrm{ and }
          \epsilon^{(2)}_{2,6}=\sqrt{2}\epsilon^{(1)}_{2,6}=\frac{\lambda\sqrt{2}}{4}.
         \end{aligned}
    \end{equation}
    By part (d), the weights of the remaining edges remain unchanged, that is, $\epsilon^{(2)}_{1,6}=\epsilon^{(1)}_{1,6}$, $\epsilon^{(2)}_{3,6}=\epsilon^{(1)}_{3,6}$, $\epsilon^{(2)}_{4,6}=\epsilon^{(1)}_{4,6}$, $\epsilon^{(2)}_{5,6}=\epsilon^{(1)}_{5,6}$, $\epsilon^{(2)}_{1,4}=\epsilon^{(1)}_{1,4}$ and $\epsilon^{(2)}_{3,4}=\epsilon^{(1)}_{3,4}$. The weight $p^{(2)}_{2}$ is given by $p^{(2)}_{2}=p^{(1)}_{2}+\epsilon^{(1)}_{1,2}=0+\lambda=\lambda$ by part (b), while part (a) tells us that the remaining weights are the same: $p^{(2)}_{3}=p^{(1)}_{3}$, $p^{(2)}_{4}=p^{(1)}_{4}$, $p^{(2)}_{5}=p^{(1)}_{5}$ and $p^{(2)}_{6}=p^{(1)}_{6}$.

 
 \vspace{-2cm}
\begin{figure}[H]
    \centering
    \begin{minipage}[t]{0.49\textwidth}
        \centering
        \hspace{-2.5cm}
        \begin{tikzpicture}[scale=1]
\path(0,0)node[shape=circle,draw,label=below:$B^{(1)}_{1}$,inner sep=0] (bag1) {$\cup$}
    ( -.5,0)node[shape=circle,draw,fill=black,inner sep=0pt,minimum size=4pt] (bag12) {}

    (2,1)node[shape=circle,draw,label=below:$B^{(1)}_{2}$,inner sep=0] (bag2) {$\otimes$}
    ( 2.5,1)node[shape=circle,draw,fill=black,inner sep=0pt,minimum size=4pt] (bag22) {}

    (0,2)node[shape=circle,draw,label=below:$B^{(1)}_{3}$,inner sep=0] (bag3) {$\cup$}
    ( -0.5,2)node[shape=circle,draw,fill=black,inner sep=0pt,minimum size=4pt] (bag32) {}

    (2,3)node[shape=circle,draw,label=below:$B^{(1)}_{4}$,inner sep=0] (bag4) {$\otimes$}
    ( 2.5,3)node[shape=circle,draw,fill=black,inner sep=0pt,minimum size=4pt] (bag42) {}

    (0,4)node[shape=circle,draw,label=below:$B^{(1)}_{5}$,inner sep=0] (bag5) {$\cup$}
    ( -0.5,4)node[shape=circle,draw,fill=black,inner sep=0pt,minimum size=4pt] (bag52) {}

    (2,5)node[shape=circle,draw,label=below:$B^{(1)}_{6}$,inner sep=0] (bag6) {$\otimes$}
    ( 2.5,5)node[shape=circle,draw,fill=black,inner sep=0pt,minimum size=4pt] (bag62) {};
     
     
      \draw[-](bag1)--(bag12);

      \draw[-](bag2)--(bag22);

      \draw[-](bag3)--(bag32);

      \draw[-](bag4)--(bag42);

      \draw[-](bag5)--(bag52);

      \draw[-](bag6)--(bag62);

      \draw[-](bag2)--node[below,sloped] {$\epsilon^{(1)}_{1,2}$}(bag1);

      \draw[-](bag4)--node[above,sloped] {$\epsilon^{(1)}_{1,4}$}(bag1);
      \draw[-](bag4)--node[right] {$\epsilon^{(1)}_{2,4}$}(bag2);
      \draw[-](bag4)--node[above,sloped] {$\epsilon^{(1)}_{3,4}$}(bag3);

      \draw[-](bag6)--node[above,sloped] {$\epsilon^{(1)}_{3,6}$}(bag3);
      \draw[-](bag6)--node[right] {$\epsilon^{(1)}_{4,6}$}(bag4);
      \draw[-](bag6)--node[above,sloped] {$\epsilon^{(1)}_{5,6}$}(bag5);

      \draw[-] (bag6) .. controls +(up:3cm) and +(left:5cm) .. node[above,sloped] {$\epsilon^{(1)}_{1,6}$} (bag1);

       \draw[-] (bag6) .. controls +(up:3cm) and +(left:-3cm) .. node[above,sloped] {$\epsilon^{(1)}_{2,6}$} (bag2);
      
      
      
      

\end{tikzpicture}
        \caption{\label{secT1} Weighted threshold graph $T_{1}$ with binary sequence $(010101)$.}
    \end{minipage}
    \hfill
    \begin{minipage}[t]{0.49\textwidth}
        \centering
\begin{tikzpicture}[scale=1]
\path


    (2,1)node[shape=circle,draw,inner sep=0] (bag2) {$\otimes$}
    ( 3,1)node[shape=circle,draw,fill=black,inner sep=0pt,minimum size=4pt,label=right:$\lambda$] (bag22) {}

    (0,2)node[shape=circle,draw,inner sep=0] (bag3) {$\cup$}
    ( -0.5,2)node[shape=circle,draw,fill=black,inner sep=0pt,minimum size=4pt,label=left:$\lambda$] (bag32) {}

    (2,3)node[shape=circle,draw,inner sep=0] (bag4) {$\otimes$}
    ( 2.5,3)node[shape=circle,draw,fill=black,inner sep=0pt,minimum size=4pt,label=right:$\lambda$] (bag42) {}

    (0,4)node[shape=circle,draw,inner sep=0] (bag5) {$\cup$}
    ( -0.5,4)node[shape=circle,draw,fill=black,inner sep=0pt,minimum size=4pt,label=left:$0$] (bag52) {}

    (2,5)node[shape=circle,draw,inner sep=0] (bag6) {$\otimes$}
    ( 2.5,5)node[shape=circle,draw,fill=black,inner sep=0pt,minimum size=4pt,label=right:$0$] (bag62) {};
     
     

      \draw[-](bag2)--(bag22);

      \draw[-](bag3)--(bag32);

      \draw[-](bag4)--(bag42);

      \draw[-](bag5)--(bag52);

      \draw[-](bag6)--(bag62);


      \draw[-](bag4)--node[right] {$\frac{-\lambda\sqrt{2}}{2}$}(bag2);
      \draw[-](bag4)--node[below,sloped] {$\frac{-\lambda\sqrt{2}}{2}$}(bag3);

      \draw[-](bag6)--node[above,sloped] {$\frac{\lambda\sqrt{2}}{4}$}(bag3);
      \draw[-](bag6)--node[right] {$\frac{\lambda}{2}$}(bag4);
      \draw[-](bag6)--node[above,sloped] {$\frac{\lambda\sqrt{2}}{2}$}(bag5);


       \draw[-] (bag6) .. controls +(up:3cm) and +(left:-3cm) .. node[above,sloped] {$\frac{\lambda\sqrt{2}}{4}$} (bag2);
      
      
      
      

\end{tikzpicture}
        \caption{\label{secT2} Weighted threshold $T_2$ with $\Spec(T_1)=\Spec(T_2)\cup\{-\lambda\}$.}
    \end{minipage}
\end{figure}

Repeating this process, we obtain the sequence of weighted threshold graphs $T_3,T_4,T_5,T_6$ as depicted in Figures~\ref{secT3} to~\ref{secT6}, respectively.  

\begin{figure}[H]
    \centering
    \begin{minipage}[t]{0.49\textwidth}
        \centering
        \begin{tikzpicture}
            \path
            (0,2)node[shape=circle,draw,label=below:$B^{(3)}_{3}$,inner sep=0] (bag3) {$\cup$}
            (-0.5,2)node[shape=circle,draw,fill=black,inner sep=0pt,minimum size=4pt,label=left:$\lambda$] (bag32) {}
            
            (2,3)node[shape=circle,draw,label=below:$B^{(3)}_{4}$,inner sep=0] (bag4) {$\otimes$}
            (2.5,3)node[shape=circle,draw,fill=black,inner sep=0pt,minimum size=4pt,label=right:$\lambda$] (bag42) {}
            
            (0,4)node[shape=circle,draw,label=below:$B^{(3)}_{5}$,inner sep=0] (bag5) {$\cup$}
            (-0.5,4)node[shape=circle,draw,fill=black,inner sep=0pt,minimum size=4pt,label=left:$0$] (bag52) {}
            
            (2,5)node[shape=circle,draw,label=above:$B^{(3)}_{6}$,inner sep=0] (bag6) {$\otimes$}
            (2.5,5)node[shape=circle,draw,fill=black,inner sep=0pt,minimum size=4pt,label=right:$0$] (bag62) {};

            \draw[-](bag3)--(bag32);
            \draw[-](bag4)--(bag42);
            \draw[-](bag5)--(bag52);
            \draw[-](bag6)--(bag62);
            \draw[-](bag4)--node[below,sloped] {$-\lambda$}(bag3);
            \draw[-](bag6)--node[above,sloped] {$\frac{\lambda}{2}$}(bag3);
            \draw[-](bag6)--node[right] {$\frac{\lambda}{2}$}(bag4);
            \draw[-](bag6)--node[above,sloped] {$\frac{\lambda\sqrt{2}}{2}$}(bag5);
        \end{tikzpicture}
        \caption{\label{secT3} Weighted threshold $T_3$ with $\Spec(T_2)=\Spec(T_3)\cup\{\lambda\}$.}
    \end{minipage}
    \hfill
    \begin{minipage}[t]{0.49\textwidth}
        \centering
        \begin{tikzpicture}
\path




    (2,3)node[shape=circle,draw,label=below:$B^{(4)}_{4}$,inner sep=0] (bag4) {$\otimes$}
    ( 2.5,3)node[shape=circle,draw,fill=black,inner sep=0pt,minimum size=4pt,label=right:$0$] (bag42) {}

    (0,4)node[shape=circle,draw,label=below:$B^{(4)}_{5}$,inner sep=0] (bag5) {$\cup$}
    ( -0.5,4)node[shape=circle,draw,fill=black,inner sep=0pt,minimum size=4pt,label=left:$0$] (bag52) {}

    (2,5)node[shape=circle,draw,label=above:$B^{(4)}_{6}$,inner sep=0] (bag6) {$\otimes$}
    ( 2.5,5)node[shape=circle,draw,fill=black,inner sep=0pt,minimum size=4pt,label=right:$0$] (bag62) {};
     
     



      \draw[-](bag4)--(bag42);

      \draw[-](bag5)--(bag52);

      \draw[-](bag6)--(bag62);



      \draw[-](bag6)--node[right] {$\frac{\lambda\sqrt{2}}{2}$}(bag4);
      \draw[-](bag6)--node[above,sloped] {$\frac{\lambda\sqrt{2}}{2}$}(bag5);


      
      
      
      

\end{tikzpicture}
        \caption{\label{secT4} Weighted threshold $T_4$ with $\Spec(T_3)=\Spec(T_4)\cup\{2\lambda\}$.}
    \end{minipage}
\end{figure}

\begin{figure}[H]
    \centering
    \begin{minipage}[t]{0.49\textwidth}
        \centering
        \begin{tikzpicture}
\path





    (0,4)node[shape=circle,draw,label=above:$B^{(4)}_{5}$,inner sep=0] (bag5) {$\cup$}
    ( -0.5,4)node[shape=circle,draw,fill=black,inner sep=0pt,minimum size=4pt,label=left:$0$] (bag52) {}

    (2,5)node[shape=circle,draw,label=above:$B^{(5)}_{6}$,inner sep=0] (bag6) {$\otimes$}
    ( 2.5,5)node[shape=circle,draw,fill=black,inner sep=0pt,minimum size=4pt,label=right:$0$] (bag62) {};
     
     




      \draw[-](bag5)--(bag52);

      \draw[-](bag6)--(bag62);



      \draw[-](bag6)--node[above,sloped] {$\lambda$}(bag5);


      
      
      
      

\end{tikzpicture}

        \caption{\label{secT5}  Weighted threshold $T_5$ with $\Spec(T_4)=\Spec(T_5)\cup\{0\}$.}
    \end{minipage}
    \hfill
    \begin{minipage}[t]{0.49\textwidth}
        \centering
        \begin{tikzpicture}
\path






    (2,5)node[shape=circle,draw,label=below:$B^{(6)}_{6}$,inner sep=0] (bag6) {$\otimes$}
    ( 2.5,5)node[shape=circle,draw,fill=black,inner sep=0pt,minimum size=4pt,label=right:$\lambda$] (bag62) {};
     
     





      \draw[-](bag6)--(bag62);





      
      
      
      

\end{tikzpicture}

        \caption{\label{secT6}  Weighted threshold $T_6$ with $\Spec(T_5)=\Spec(T_6)\cup\{-\lambda\}$.}
    \end{minipage}
\end{figure}

Since $T_6$ is a singleton, $\Spec(T_6)=\{\lambda\}$. We conclude that
\begin{equation}\nonumber
          \mbox{Spec}(T_{1})=\{-\lambda^{(2)},\lambda,2\lambda,0\} \cup \Spec(T_6)=\{\lambda^{(2)},-\lambda^{(2)},0,2\lambda\}.
\end{equation}

Going back to Example \ref{example_reduction}, we see that the weighted threshold graph $G$ of Example \ref{example_reduction} has spectrum
\begin{equation}\nonumber
         \begin{aligned}
          \mbox{Spec}(G)&=\mbox{Spec}(T_{1})\cup\{0^{(a_{1}+a_{4}+a_{5}-3)},\lambda^{(a_{2}+a_{3}+a_{6}-3)}\}\\ &=\{0^{(a_{1}+a_{4}+a_{5}-2)},\lambda^{(a_{2}+a_{3}+a_{6}-1)},-\lambda^{(2)},2\lambda\}.
         \end{aligned}
    \end{equation}
\end{Example}
\color{black}

\section{Proof of Theorem \ref{main_theorem}}
\label{sec:cons}
The aim of this section is to provide a proof of our main result, Theorem \ref{main_theorem}, which states that, for any threshold graph $G$ and any nonzero real number $\lambda$, there exists a matrix $M \in S(G)$ such that $\DSpec(M) \subseteq \{-\lambda,0,\lambda,2\lambda\}$. We shall prove that this matrix $M$ may be defined as a uniform weighted threshold graph. To this end, let $G=G(p,\epsilon)$ be a uniform weighted threshold graph with binary representation given by $b= (0^{a_{1}}1^{a_{2}}0^{a_{3}}1^{a_{4}}\cdots b_{r}^{a_{r}})$ with $a_{i}\geq 1$ for $1\leq i\leq r$ and vertex weights defined by
\begin{equation}\nonumber
    \begin{cases}
         p_{i}=0 \mbox{ and } \epsilon_{i}=0 & \mbox{if } i \equiv 1 \,(\bmod~ 4),\\
         p_{i}=\frac{\lambda(a_{i}-1)}{a_{i}} \mbox{ and } \epsilon_{i}=-\frac{\lambda}{a_{i}} & \mbox{if } i \equiv 2 \,(\bmod~ 4),\\
        p_{i}=\lambda \mbox{ and } \epsilon_{i}=0 & \mbox{if } i \equiv 3 \,(\bmod~ 4), \\
         p_{i}=\frac{\lambda}{a_{i}} \mbox{ and } \epsilon_{i}=\frac{\lambda}{a_{i}} & \mbox{if } i \equiv 0 \,(\bmod~ 4),
    \end{cases}
\end{equation}
with $\lambda\neq 0$.
By Corollary \ref{reduced_graph}
we have that $\mbox{DSpec}(G)\subseteq\mbox{DSpec}(T_{1})\cup\{0,\lambda\}$ regardless of the choice of edge-weights $\epsilon_{i,j}$, where $T_1$ is a uniform weighted threshold graph with the same number of bags, but for which each bag contains a single vertex.
Moreover, the edge-weights $\epsilon_{i,j}$ of $G$ relate to the edge weights $\epsilon^{(1)}_{i,j}$ of $T_{1}$ by the equation 
\begin{equation}\label{eq:rel}\epsilon_{i,j}=\frac{\epsilon^{(1)}_{i,j}}{\sqrt{a_{i}a_{j}}}.
\end{equation}
To derive
Theorem \ref{main_theorem}, it suffices to show that it is possible to assign edge-weights to $T_{1}$ in a way that $\DSpec(T_1)\subseteq \{-\lambda,0,\lambda,2\lambda\}$. This is the subject of Lemma \ref{sequence_of_graphs} below. Because this result is more technical, we start with a brief informal description. Suppose that we started with a weighted threshold graph $T_1$ whose bag representation has $r$ bags and whose edge-weights are unknown nonzero values. If these edge-weights satisfy the constraints described in Lemma~\ref{key}, we can apply it to describe the spectrum of $T_1$ in terms of the spectrum of a uniform weighted threshold graph $T_2$ with one fewer bag. Again, if the edge-weights of $T_2$ satisfy the constraints of Lemma~\ref{key}, a new reduction step may be performed. 
The idea is to ensure that the edge-weights can be chosen in a way that keeps the process going until we reach a singleton $T_r$ with vertex weight 0 or $\lambda$. Unfortunately, this
process cannot be described by fixing the weights one by one, but rather by defining a
sequence of constraints that tie these weights to the value of weights that will be defined later in the process.

\begin{lemat}
\label{sequence_of_graphs}
Let $r$ be a positive integer and let $T_{1}$ be
a threshold graph with bag representation given by $\{B_{i}^{(1)}\}_{i=1}^{r}$, where $|B_{i}^{(1)}|=1$ for every $i$. Suppose that each bag $B_{i}^{(1)}$ is assigned weight
 $p^{(1)}_{i}=0$ if $i\equiv q \mod 4$ for $q\in\{1,2\}$ and $p^{(1)}_{i}=\lambda$ if $i\equiv q \mod 4$ for $q\in\{0,3\}$. Then the following hold:
\begin{enumerate}
    \item\label{itemi} If $r=1$, then $T_{1}=v_{1}^{(1)}$ with weight $p^{(1)}_{1}=0$ satisfies Spec$(T_{1})=\{0^{(1)}\}$.\label{itemi} 
    \item \label{itemii} If $r>1$, then the following holds for every $i\in\{1,\ldots,r-1\}$. For every choice of nonzero edge-weights $\epsilon_{\ell,j}^{(1)}$, where $i<\ell<j\leq r$, and $j$ is even, there exist nonzero edge-weights $\epsilon_{\ell,j}^{(1)}$ for all $\ell \leq i$ and all $j$ even with $j>\ell$, with the property that there exists a sequence of weighted threshold graphs $T_1(p^{(1)},\epsilon^{(1)}),T_{2}(p^{(2)},\epsilon^{(2)}),\ldots,T_{i+1}(p^{(i+1)},\epsilon^{(i+1)})$ that satisfies the following for every $k\in\{1,\ldots,i\}$:
\begin{itemize}
    \item[(i)]$T_{k+1}$ has bag representation $\{B_{s}^{(k+1)}\}_{s=k+1}^{r}$, where $|B_{s}^{(k+1)}|=1$ for every $s$;
    \item[(ii)] 
    $\epsilon_{k,j}^{(k)}= \epsilon_{k+1,j}^{(k)}\mbox{ for }k+1<j\leq r,\mbox{ with $j$ even}$;
    \item[(iii)]\label{itemiib} If $q\in\{0,2\}$ is such that $k+1\equiv q~(\bmod \, 4)$, then $\epsilon^{(k)}_{k,k+1} = (q-1)\lambda$; 
    \item[(iv)] \label{itemiic} For $k+1<z < j\leq r$, where $j$ is even,
\begin{equation}\nonumber
    \begin{aligned}
    (a)~ p^{(k+1)}_{k+1}&=
    p^{(k)}_{k}+\epsilon^{(k)}_{k,k+1}\text{\footnotemark}
    &\text{ and }\text{ }\text{ } &\epsilon^{(k+1)}_{k+1,j}=\sqrt{2}\epsilon^{(k)}_{k+1,j}, \\
    (b)~ p^{(k+1)}_{z}&=p^{(k)}_{z}=p^{(1)}_{z} &\text{ and }\text{  } \text{ }&\epsilon^{(k+1)}_{z,j}=\epsilon^{(k)}_{z,j}=\epsilon^{(1)}_{z,j}; 
    \end{aligned}
    \end{equation}
\item[(v)]\label{itemiid} $\displaystyle{\Spec(T_{k})=\Spec(T_{k+1})\cup\{x_{k}\},}$ where $x_k = p_{k}^{(k)}-\epsilon_{k,k+1}^{(k)}$ if $k$ is odd and $x_k=p_{k}^{(k)}$ if $k$ is even. Moreover, 
$$x_k=
\begin{cases}
0,& \textrm{ if }k \equiv 0~(\bmod \, 4)\\
-\lambda,& \textrm{ if }k \equiv 1~(\bmod \, 4)\\
\lambda,& \textrm{ if }k \equiv 2~(\bmod \, 4)\\
2\lambda,& \textrm{ if }k \equiv 3~(\bmod \, 4).
\end{cases}$$
    
\end{itemize}

\end{enumerate}

\end{lemat}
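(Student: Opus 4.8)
The plan is to argue by induction on $i$, invoking Lemma~\ref{key} at each step to peel off one bag, and to choose the unknown edge-weights $\epsilon^{(1)}_{\ell,j}$ in reverse order of $\ell$, so that when we get to defining the weights incident to bag $\ell$ we already know all the weights incident to bags $\ell+1,\ldots,r$. Part~\eqref{itemi} is immediate: a single bag with vertex weight $0$ has spectrum $\{0\}$. For part~\eqref{itemii}, fix nonzero values $\epsilon^{(1)}_{\ell,j}$ for all $i<\ell<j\le r$ with $j$ even; these are the ``inherited'' weights. We must produce the remaining $\epsilon^{(1)}_{\ell,j}$ for $\ell\le i$ and build the chain $T_1,\ldots,T_{i+1}$.

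The induction is on $i$. For $i=1$ we need: a choice of the weights $\epsilon^{(1)}_{1,j}$, $j$ even, $j>1$, such that the hypotheses of Lemma~\ref{key} hold for $T_1$ with $k=1$. Those hypotheses are $\epsilon^{(1)}_{1,j}=\epsilon^{(1)}_{2,j}$ for all $2<j\le r$, $j$ even (this \emph{defines} $\epsilon^{(1)}_{1,j}$ in terms of the already-fixed $\epsilon^{(1)}_{2,j}$, giving item~(ii)), and $p^{(1)}_1=p^{(1)}_2$ (which holds because both are $0$, since $1\equiv1$ and $2\equiv2\bmod 4$). We also must set $\epsilon^{(1)}_{1,2}$; item~(iii) forces $\epsilon^{(1)}_{1,2}=(q-1)\lambda$ where $2\equiv q\bmod 4$, i.e. $q=2$, so $\epsilon^{(1)}_{1,2}=\lambda\ne0$. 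With these choices Lemma~\ref{key} applies and yields $T_2$ with the weights prescribed in items~(i),(iv): $p^{(2)}_2=p^{(1)}_1+\epsilon^{(1)}_{1,2}=0+\lambda=\lambda$, $\epsilon^{(2)}_{2,j}=\sqrt2\,\epsilon^{(1)}_{2,j}$, and everything else unchanged. Finally item~(v): $k=1$ is odd, so $x_1=p^{(1)}_1-\epsilon^{(1)}_{1,2}=0-\lambda=-\lambda$, matching the $k\equiv1\bmod4$ case, and $\Spec(T_1)=\Spec(T_2)\cup\{-\lambda\}$ by \eqref{eq:spec}.

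For the inductive step, suppose the statement holds for $i-1$; apply it with the inherited weights $\epsilon^{(1)}_{\ell,j}$, $i-1<\ell<j\le r$ ($j$ even) — in particular the weights with $\ell=i$ are among these — to obtain $T_1,\ldots,T_i$ and the weights $\epsilon^{(1)}_{\ell,j}$ for $\ell\le i-1$. We now check that Lemma~\ref{key} can be applied to $T_i$ with $k=i$. The needed identities are $\epsilon^{(i)}_{i,j}=\epsilon^{(i)}_{i+1,j}$ for $i+1<j\le r$, $j$ even, and $p^{(i)}_i=p^{(i)}_{i+1}$, plus we must designate $\epsilon^{(i)}_{i,i+1}$. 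Here is the key point: by item~(iv)(b) of the inductive hypothesis, for indices $z>i-1$ the weights and vertex-weights in $T_i$ agree with those in $T_1$ (the superscript-$(1)$ data), so $\epsilon^{(i)}_{i,j}=\epsilon^{(1)}_{i,j}$, $\epsilon^{(i)}_{i+1,j}=\epsilon^{(1)}_{i+1,j}$, and $p^{(i)}_i=p^{(1)}_i$, $p^{(i)}_{i+1}=p^{(1)}_{i+1}$ — \emph{except} when $i+1$ was the ``promoted'' index of a previous step, which it is not since each $T_k$ only modifies bag $k+1$. Actually one must be slightly careful: $T_i$ was produced by Lemma~\ref{key} applied to $T_{i-1}$ with $k=i-1$, which modifies exactly bag $i$; so $p^{(i)}_i=p^{(i-1)}_{i-1}+\epsilon^{(i-1)}_{i-1,i}$ and $\epsilon^{(i)}_{i,j}=\sqrt2\,\epsilon^{(i-1)}_{i,j}$, while bag $i+1$ is untouched, $p^{(i)}_{i+1}=p^{(1)}_{i+1}$, $\epsilon^{(i)}_{i+1,j}=\epsilon^{(1)}_{i+1,j}$. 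So the constraint $\epsilon^{(i)}_{i,j}=\epsilon^{(i)}_{i+1,j}$ reads $\sqrt2\,\epsilon^{(i-1)}_{i,j}=\epsilon^{(1)}_{i+1,j}$; unwinding $\epsilon^{(i-1)}_{i,j}$ through the previous steps expresses the left side as a fixed nonzero multiple ($\sqrt2^{\,t}$ for the appropriate $t$) of $\epsilon^{(1)}_{i,j}$, and the equation \emph{defines} $\epsilon^{(1)}_{i,j}$ as a nonzero multiple of the already-fixed $\epsilon^{(1)}_{i+1,j}$. This is where the ``define weights later'' device does its work. Similarly, $p^{(i)}_i=p^{(i)}_{i+1}$ must be verified: both equal $0$ or both equal $\lambda$. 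Indeed, by item~(v), $x_{i-1}$ depends only on $(i-1)\bmod4$, and a short computation using item~(iv)(a) shows $p^{(i)}_i=p^{(i-1)}_{i-1}+\epsilon^{(i-1)}_{i-1,i}$ evaluates to $0$ when $i\equiv1,2\bmod4$ and to $\lambda$ when $i\equiv0,3\bmod4$ — the same pattern as $p^{(1)}_{i+1}$ shifted by one, which is exactly $p^{(i)}_{i+1}=p^{(1)}_{i+1}$. For $\epsilon^{(i)}_{i,i+1}$ we take item~(iii): $(q-1)\lambda$ with $i+1\equiv q\bmod 4$. Then Lemma~\ref{key} applies, producing $T_{i+1}$ with the weights in items~(i),(iv) and $\Spec(T_i)=\Spec(T_{i+1})\cup\{x_i\}$; evaluating $x_i=p^{(i)}_i-\epsilon^{(i)}_{i,i+1}$ (for $i$ odd) or $x_i=p^{(i)}_i$ (for $i$ even) against the four residues gives the displayed table in item~(v). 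This closes the induction.

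The main obstacle is purely bookkeeping: verifying that the recursively-defined vertex weights $p^{(k)}_k$ land in $\{0,\lambda\}$ according to $k\bmod4$ and that $x_k$ comes out as $0,-\lambda,\lambda,2\lambda$ for $k\equiv0,1,2,3$, since the value of $p^{(k)}_k$ at step $k$ is built from $p^{(k-1)}_{k-1}$ and the designated weight $\epsilon^{(k-1)}_{k-1,k}=(q-1)\lambda$, so one must run the four residue classes through items~(iii) and~(iv)(a) and confirm consistency with the $\{0,1,2\}\ni q$ convention. Example~\ref{example_smalltree} already exhibits exactly this cycle of four reductions for $r=6$, producing successively $x_1=-\lambda$, $x_2=\lambda$, $x_3=2\lambda$, $x_4=0$, $x_5=-\lambda$, which is the pattern to be proved in general; the abstract argument is just the same computation with $a_i$'s collapsed to $1$ and the weights carried symbolically.
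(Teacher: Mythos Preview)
Your approach is correct and is essentially the paper's own proof: induction on $i$, with Lemma~\ref{key} peeling off one bag per step and the missing weights defined backward by $\epsilon^{(1)}_{i,j}=\epsilon^{(1)}_{i+1,j}/\sqrt{2}$ (and $\epsilon^{(1)}_{i,i+1}=\pm\lambda/\sqrt{2}$ for odd $i\ge 3$); the paper simply writes out the four residue classes of $i$ explicitly where you sketch them uniformly. Two small corrections: first, the $\epsilon^{(1)}_{i,j}$ must be fixed \emph{before} you invoke the step-$(i{-}1)$ hypothesis, since that hypothesis takes all weights with $\ell>i-1$ as input; second, the residue pattern you state for $p^{(i)}_i$ is actually that of $p^{(1)}_i$ --- the recursion $p^{(i)}_i=p^{(i-1)}_{i-1}+\epsilon^{(i-1)}_{i-1,i}$ gives $p^{(i)}_i=0$ for $i\equiv 0,1$ and $p^{(i)}_i=\lambda$ for $i\equiv 2,3\pmod 4$, and it is \emph{this} pattern that coincides with $p^{(1)}_{i+1}=p^{(i)}_{i+1}$.
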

\footnotetext{If $k$ is even, $\epsilon^{(k)}_{k,k+1}=0$.}

Note that part $(2.iv)$ implies that $p^{(r)}_r\stackrel{(iv.a)}=p^{(r-1)}_{r-1}+\epsilon_{r-1,r}^{(r-1)}  \stackrel{(v)}= \left(x_{r-1}+\epsilon_{r-1,r}^{(r-1)}\right)+\epsilon_{r-1,r}^{(r-1)}$, where we are assuming that  $\epsilon_{r-1,r}^{(r-1)}=0$ when $r-1$ is even. Assume that $r \equiv q ~(\bmod \, 4)$. We conclude that $p^{(r)}_r=0$ if $q \in \{0,1\}$, and that $p^{(r)}_r=\lambda$ if $q \in \{2,3\}$. 

Before proving the lemma, we would like to give some intuition about part (2). The indices $i\in\{1,\ldots, r-1\}$ represent steps in an iterative procedure whose final objective is to find explicit weights for $T_1$. At each step $i$, the procedure defines the weights of the edges that are currently being processed in terms of edge-weights of edges that have yet to be processed. This is why the statement of (2) claims that, for any future assignment of weights to the unprocessed edges (i.e., $\epsilon_{\ell,j}$ for $i<\ell<j\leq r$), we may define the remaining weights (i.e., $\epsilon_{\ell,j}$ for $\ell\leq i$ and $\ell<j\leq r$) so that items $(i),\ldots,(v)$ are satisfied. When the procedure reaches $i=r-1$, all edges have been processed, so that part (2) is not conditioned on the choice of any unprocessed values. In other words, at this point all the edge-weights of $T_1$ have been defined. 

\begin{proof}
For $r=1$, the result is trivial.

We now consider $r\geq 2$ and proceed by induction on $i$. Let $T_1$ be as in the statement of the Theorem.

For $i=1$, let $\epsilon_{\ell,j}^{(1)}$ be arbitrary edge-weights for
all pairs $(\ell,j)$ such that $2 \leq \ell<j\leq r$, and $j$ is even. We need to find edge-weights $\epsilon_{1,j}^{(1)}$, for all even values of $j$, such that items (i)-(v) hold. For simplicity, we view these quantities $\epsilon_{1,j}^{(1)}$ as indeterminates.

By hypothesis, we have
    \begin{equation}
    \label{vertB2}
    p_1^{(1)}=p_2^{(1)}=0.
    \end{equation}
In order to satisfy (iii), we fix $\epsilon_{1,2}^{(1)}=\lambda$. We now impose the following constraints on the weights of the edges connected to vertices in bags $B^{(1)}_{1}$ and $B^{(1)}_{2}$:
    \begin{equation}\label{bag:b2}
        \epsilon^{(1)}_{1,j}=\epsilon^{(1)}_{2,j} \mbox{ for }2< j\leq r,\mbox{ $j$ even}.
    \end{equation}
The equation~\eqref{bag:b2} defines $\epsilon_{1,j}^{(1)}$ for all even $j>2$, which proves (ii).

Note that~\eqref{vertB2} and~\eqref{bag:b2} imply that Lemma~\ref{key} may be applied, which produces a weighted threshold graph $T_2=T_2(p^{(2)},\epsilon^{(2)})$ with bag representation $\{B_z^{(1)}\}_{z=2}^r$ (this proves (i)) such that
\begin{itemize}
\item[(a)] $p_z^{(2)}=p_z^{(1)}$ for all $z \geq 3$; 

\item[(b)] $p_2^{(2)}=p_1^{(1)}+\epsilon_{1,2}^{(1)}$; 

\item[(c)] $\epsilon_{z,j}^{(2)}=\epsilon_{z,j}^{(1)}$, for all $z \geq 3$ and all even $j>i$;

\item[(d)] $\epsilon_{2,j}^{(2)}=\sqrt{2} \epsilon^{(1)}_{2,j}$, where $j>2$ and $j$ is even.
\end{itemize}

Note that (a), (b) and (c) ensure that the vertex-weights $p^{(2)}_z$ are defined for all $z \geq 2$ and the edge-weights $\epsilon_{z,j}^{(2)}$ are defined for all $z\geq 3$ in a way that satisfies  (iv). Set $\epsilon_{2,j}^{(2)}=\sqrt{2} \epsilon_{2,j}^{(1)}$ for all even $j>2$, so that (iv) is also satisfied in this case. By Lemma~\ref{key}, $\Spec(T_1)=\Spec(T_2) \cup \{p_1^{(1)}-\epsilon_{1,2}^{(1)}\}=\Spec(T_2) \cup \{-\lambda\}$, so that (v) is satisfied. 

Assume that the lemma holds for $i-1$, where $1 \leq i-1 <r-1$. We show that it holds for $i$. To this end, 
let $\epsilon_{\ell,j}^{(1)}$ be arbitrary edge-weights for
all pairs $(\ell,j)$ such that $i+1 \leq \ell<j\leq r$, and $j$ is even. We need to find edge-weights $\epsilon_{\ell,j}^{(1)}$, for all $\ell \leq i$ and all even values of $j>\ell$, such that items (i)-(v) hold. 

By induction, for any choice of nonzero values for $\epsilon_{i,j}^{(1)}$ for all even $j>i$,  there exist nonzero values $\epsilon_{\ell,j}^{(1)}$ for all $\ell<i$ and all even $j>i-1$ for which the following holds. There exists a sequence of weighted threshold graphs $T_1(p^{(1)},\epsilon^{(1)}),T_{2}(p^{(2)},\epsilon^{(2)}),\ldots,T_{i}(p^{(i)},\epsilon^{(i)})$ that satisfies (i)-(v) for every $k \in \{1,\ldots,i-1\}$.

Our aim is to choose $\epsilon_{i,j}^{(1)}$ appropriately for all even $j>i$ to extend the above sequence to $T_{i+1}(p^{(i+1)},\epsilon^{(i+1)})$. The argument varies according to the remainder of $i$ modulo 4.

\vspace{4pt}

\noindent \textbf{Case 1: $i\equiv 1~(\bmod \, 4)$.} Consider the weighted threshold graph $T_{i}(p^{(i)},\epsilon^{(i)})$. If we have 
\begin{equation}\label{eq:ind1}
p_i^{(i)}=p^{(i)}_{i+1} \textrm{ and }\epsilon_{i,j}^{(i)} = \epsilon_{i+1,j}^{(i)}, \textrm{ for all even }j>i+1,
\end{equation}
then Lemma~\ref{key} may be applied. The first part of~\eqref{eq:ind1} follows by induction:
\begin{eqnarray*}&&p_i^{(i)}\stackrel{(iv.a)}{=}p_{i-1}^{(i-1)}\stackrel{(v)}{=}x_{i-1}=0\\
&&p_{i+1}^{(i)}\stackrel{(iv.b)}{=}p_{i+1}^{(1)}=0.
\end{eqnarray*}
The final equation comes from the fact that $i+1 \equiv 2~(\bmod \, 4)$. Note that by induction $\epsilon_{i+1,j}^{(i)}=\epsilon_{i+1,j}^{(i-1)}=\epsilon_{i+1,j}^{(1)}$ and $\epsilon_{i,j}^{(i)}=\sqrt{2}\epsilon_{i,j}^{(i-1)}=\sqrt{2}\epsilon_{i,j}^{(1)}$. Hence, to satisfy the second equation of~\eqref{eq:ind1}, we fix the appropriate values for $\epsilon_{i,j}^{(1)}$, namely
$$\epsilon_{i,j}^{(1)}=\frac{\epsilon_{i+1,j}^{(1)}}{\sqrt{2}}$$
for all even $j>i+1$. Moreover, since $i+1$ is even, in order to satisfy (iii), we must fix 
$$\lambda=\epsilon_{i,i+1}^{(i)}\stackrel{(iv.a)}{=}\sqrt{2}\epsilon_{i,i+1}^{(i-1)}\stackrel{(iv.b)}{=}\sqrt{2}\epsilon_{i,i+1}^{(1)},$$  
that is, $\epsilon_{i,i+1}^{(1)}=\lambda/\sqrt{2}$. Note that at this point all weights are fixed. 

By Lemma~\ref{key} applied to $T_i$, there is a weighted threshold graph $T_{i+1}(p^{(i+1)},\epsilon^{(i+1)})$ with bag representation $\{B_s^{(i+1)}\}_{s=i+1}^r$ such that:
\begin{itemize}
\item[(a)] $p_z^{(i+1)}=p_z^{(i)}\stackrel{(iv.b)}{=}p_z^{(1)}$ for all $z > i+1$; 

\item[(b)] $p_{i+1}^{(i+1)}=p_i^{(i)}+\epsilon_{i,i+1}^{(i)}=0+\lambda=\lambda$; 

\item[(c)] $\epsilon_{z,j}^{(i+1)}=\epsilon_{z,j}^{(i)}\stackrel{(iv.b)}{=}\epsilon_{z,j}^{(1)}$, for all $z > i+1$ and all even $j>i$;

\item[(d)] $\epsilon_{i+1,j}^{(i+1)}=\sqrt{2} \epsilon^{(i)}_{i+1,j}\stackrel{(iv.a)}{=}\sqrt{2} \epsilon^{(1)}_{i+1,j}$, for all even $j>i+1$.
\end{itemize}

We consider the sequence $T_1(p^{(1)},\epsilon^{(1)}),T_{2}(p^{(2)},\epsilon^{(2)}),\ldots,T_{i+1}(p^{(i+1)},\epsilon^{(i+1)})$ and we need to verify that it satisfies (i)-(v) for all $k\in \{1,\ldots,i\}$. By induction, this is clearly true for $k<i$, so consider the case $k=i$.

Item (i) holds by the definition of $T_{i+1}$. Item (ii) holds by our choice of $\epsilon_{i,j}^{(1)}$ for even $j>i+1$, while (iii) holds by our choice of $\epsilon_{i,i+1}^{(1)}$. Item (iv) is an immediate consequence of (a)-(d) above. It remains to prove (v). By Lemma~\ref{key}, $\Spec(T_i)=\Spec(T_{i+1})\cup \{x_i\}$, where $x_i=p_i^{(i)}-\epsilon_{i,i+1}^{(i)}=0-\lambda=-\lambda$, which is the desired result.

\vspace{4pt}

\noindent \textbf{Case 2: $i\equiv 2~(\bmod \, 4)$.} As before, we consider the weighted threshold graph $T_{i}(p^{(i)},\epsilon^{(i)})$. If we have 
\begin{equation}\label{eq:ind2}
p_i^{(i)}=p^{(i)}_{i+1} \textrm{ and }\epsilon_{i,j}^{(i)} = \epsilon_{i+1,j}^{(i)}, \textrm{ for all even }j>i+1,
\end{equation}
then Lemma~\ref{key} may be applied. The first part follows by induction:
\begin{eqnarray*}&&p_i^{(i)}\stackrel{(iv.a)}{=}p_{i-1}^{(i-1)}+\epsilon_{i-1,i}^{(i-1)}\stackrel{(v)}{=}x_{i-1}+2 \epsilon_{i-1,i}^{(i-1)} \stackrel{(v),(iii)}{=}-\lambda+2\lambda=\lambda,\\
&&p_{i+1}^{(i)}\stackrel{(iv.b)}{=}p_{i+1}^{(1)}=\lambda.
\end{eqnarray*}
The final equality comes from the fact that $i+1 \equiv 3~(\bmod \, 4)$. For the second equation of~\eqref{eq:ind2}, it suffices to repeat Case 1 and choose the appropriate values
$$\epsilon_{i,j}^{(1)}=\frac{\epsilon_{i+1,j}^{(1)}}{\sqrt{2}} \textrm{ for all even } j>i+1.$$ 
Since $i+1$ is odd, note that at this point all weights are fixed. 

By Lemma~\ref{key} applied to $T_i$, there is a weighted threshold graph $T_{i+1}(p^{(i+1)},\epsilon^{(i+1)})$ with bag representation $\{B_s^{(i+1)}\}_{s=i+1}^r$ such that:
\begin{itemize}
\item[(a)] $p_z^{(i+1)}=p_z^{(i)}\stackrel{(iv.b)}{=}p_z^{(1)}$ for all $z > i+1$; 

\item[(b)] $p_{i+1}^{(i+1)}=p_i^{(i)}=\lambda$; 

\item[(c)] $\epsilon_{z,j}^{(i+1)}=\epsilon_{z,j}^{(i)}\stackrel{(iv.b)}{=}\epsilon_{z,j}^{(1)}$, for all $z > i+1$ and all even $j>i$;

\item[(d)] $\epsilon_{i+1,j}^{(i+1)}=\sqrt{2} \epsilon^{(i)}_{i+1,j}\stackrel{(iv.a)}{=}\sqrt{2} \epsilon^{(1)}_{i+1,j}$, for all even $j>i+1$.
\end{itemize}

We consider the sequence $T_1(p^{(1)},\epsilon^{(1)}),T_{2}(p^{(2)},\epsilon^{(2)}),\ldots,T_{i+1}(p^{(i+1)},\epsilon^{(i+1)})$ and we need to verify that it satisfies (i)-(v) for all $k\in \{1,\ldots,i\}$. By induction, this is clearly true for $k<i$, so consider the case $k=i$.

Item (i) holds by the definition of $T_{i+1}$. Item (ii) holds by our choice of $\epsilon_{i,j}^{(1)}$ for even $j>i+1$, while (iii) does not apply in this case. Item (iv) is an immediate consequence of (a)-(d) above. It remains to prove (v). By Lemma~\ref{key}, $\Spec(T_i)=\Spec(T_{i+1})\cup \{x_i\}$, where $x_i=p_i^{(i)}=\lambda$, which is the desired result.

\vspace{4pt}

\noindent \textbf{Case 3: $i\equiv 3~(\bmod \, 4)$.} As in the previous cases, we consider the weighted threshold graph $T_{i}(p^{(i)},\epsilon^{(i)})$.  If we have 
\begin{equation}\label{eq:ind3}
p_i^{(i)}=p^{(i)}_{i+1} \textrm{ and }\epsilon_{i,j}^{(i)} = \epsilon_{i+1,j}^{(i)}, \textrm{ for all even }j>i+1,
\end{equation}
then Lemma~\ref{key} may be applied. The first part follows by induction:
\begin{eqnarray*}&&p_i^{(i)}\stackrel{(iv.a)}{=}p_{i-1}^{(i-1)}\stackrel{(v)}{=}x_{i-1}=\lambda\\
&&p_{i+1}^{(i)}\stackrel{(iv.b)}{=}p_{i+1}^{(1)}=\lambda,
\end{eqnarray*}
where the final equation comes from the fact that $i+1 \equiv 0~(\bmod \, 4)$. For the second equation of~\eqref{eq:ind3}, note that by induction $\epsilon_{i+1,j}^{(i)}=\epsilon_{i+1,j}^{(i-1)}=\epsilon_{i+1,j}^{(1)}$ and $\epsilon_{i,j}^{(i)}=\sqrt{2}\epsilon_{i,j}^{(i-1)}=\sqrt{2}\epsilon_{i,j}^{(1)}$. So, we fix the appropriate values for $\epsilon_{i,j}^{(1)}$, namely
$$\epsilon_{i,j}^{(1)}=\frac{\epsilon_{i+1,j}^{(1)}}{\sqrt{2}}$$
for all even $j>i+1$. Moreover, since $i+1$ is even, in order to satisfy (iii), we must fix 
$$-\lambda=\epsilon_{i,i+1}^{(i)}\stackrel{(iv.a)}{=}\sqrt{2}\epsilon_{i,i+1}^{(i-1)}\stackrel{(iv.b)}{=}\sqrt{2}\epsilon_{i,i+1}^{(1)},$$  
that is, $\epsilon_{i,i+1}^{(1)}=-\lambda/\sqrt{2}$. Note that at this point all weights are fixed. 

By Lemma~\ref{key} applied to $T_i$, there is a weighted threshold graph $T_{i+1}(p^{(i+1)},\epsilon^{(i+1)})$ with bag representation $\{B_s^{(i+1)}\}_{s=i+1}^r$ such that:
\begin{itemize}
\item[(a)] $p_z^{(i+1)}=p_z^{(i)}\stackrel{(iv.b)}{=}p_z^{(1)}$ for all $z > i+1$; 

\item[(b)] $p_{i+1}^{(i+1)}=p_i^{(i)}+\epsilon_{i,i+1}^{(i)}=\lambda-\lambda=0$; 

\item[(c)] $\epsilon_{z,j}^{(i+1)}=\epsilon_{z,j}^{(i)}\stackrel{(iv.b)}{=}\epsilon_{z,j}^{(1)}$, for all $z > i+1$ and all even $j>i$;

\item[(d)] $\epsilon_{i+1,j}^{(i+1)}=\sqrt{2} \epsilon^{(i)}_{i+1,j}\stackrel{(iv.a)}{=}\sqrt{2} \epsilon^{(1)}_{i+1,j}$, for all even $j>i+1$.
\end{itemize}

We consider the sequence $T_1(p^{(1)},\epsilon^{(1)}),T_{2}(p^{(2)},\epsilon^{(2)}),\ldots,T_{i+1}(p^{(i+1)},\epsilon^{(i+1)})$ and we need to verify that it satisfies (i)-(v) for all $k\in \{1,\ldots,i\}$. By induction, this is clearly true for $k<i$, so consider the case $k=i$.

Item (i) holds by the definition of $T_{i+1}$. Item (ii) holds by our choice of $\epsilon_{i,j}^{(1)}$ for even $j>i+1$, while (iii) holds by our choice of $\epsilon_{i,i+1}^{(1)}$. Item (iv) is an immediate consequence of (a)-(d) above. It remains to prove (v). By Lemma~\ref{key}, $\Spec(T_i)=\Spec(T_{i+1})\cup \{x_i\}$, where $x_i=p_i^{(i)}-\epsilon_{i,i+1}^{(i)}=\lambda-(-\lambda)=2\lambda$, which is the desired result.

\vspace{4pt}

\vspace{4pt}

\noindent \textbf{Case 4: $i\equiv 0~(\bmod \, 4)$.} As before, we consider the weighted threshold graph $T_{i}(p^{(i)},\epsilon^{(i)})$. If we have 
\begin{equation}\label{eq:ind4}
p_i^{(i)}=p^{(i)}_{i+1} \textrm{ and }\epsilon_{i,j}^{(i)} = \epsilon_{i+1,j}^{(i)}, \textrm{ for all even }j>i+1,
\end{equation}
then Lemma~\ref{key} may be applied. The first part follows by induction:
\begin{eqnarray*}&&p_i^{(i)}\stackrel{(iv.a)}{=}p_{i-1}^{(i-1)}+\epsilon_{i-1,i}^{(i-1)}\stackrel{(v)}{=}x_{i-1}+2 \epsilon_{i-1,i}^{(i-1)} \stackrel{(v),(iii)}{=}2\lambda+2(-\lambda)=0,\\
&&p_{i+1}^{(i)}\stackrel{(iv.b)}{=}p_{i+1}^{(1)}=0,
\end{eqnarray*}
where the final equality comes from the fact that $i+1 \equiv 1~(\bmod \, 4)$. For the second equation of~\eqref{eq:ind4}, it suffices to repeat the previous cases and choose the appropriate values
$$\epsilon_{i,j}^{(1)}=\frac{\epsilon_{i+1,j}^{(1)}}{\sqrt{2}} \textrm{ for all even } j>i+1.$$ 
Since $i+1$ is odd, note that at this point all weights are fixed. 

By Lemma~\ref{key} applied to $T_i$, there is a weighted threshold graph $T_{i+1}(p^{(i+1)},\epsilon^{(i+1)})$ with bag representation $\{B_s^{(i+1)}\}_{s=i+1}^r$ such that:
\begin{itemize}
\item[(a)] $p_z^{(i+1)}=p_z^{(i)}\stackrel{(iv.b)}{=}p_z^{(1)}$ for all $z > i+1$; 

\item[(b)] $p_{i+1}^{(i+1)}=p_i^{(i)}=0$; 

\item[(c)] $\epsilon_{z,j}^{(i+1)}=\epsilon_{z,j}^{(i)}\stackrel{(iv.b)}{=}\epsilon_{z,j}^{(1)}$, for all $z > i+1$ and all even $j>i$;

\item[(d)] $\epsilon_{i+1,j}^{(i+1)}=\sqrt{2} \epsilon^{(i)}_{i+1,j}\stackrel{(iv.a)}{=}\sqrt{2} \epsilon^{(1)}_{i+1,j}$, for all even $j>i+1$.
\end{itemize}

We consider the sequence $T_1(p^{(1)},\epsilon^{(1)}),T_{2}(p^{(2)},\epsilon^{(2)}),\ldots,T_{i+1}(p^{(i+1)},\epsilon^{(i+1)})$ and we need to verify that it satisfies (i)-(v) for all $k\in \{1,\ldots,i\}$. By induction, this is clearly true for $k<i$, so consider the case $k=i$.

Item (i) holds by the definition of $T_{i+1}$. Item (ii) holds by our choice of $\epsilon_{i,j}^{(1)}$ for even $j>i+1$, while (iii) does not apply in this case. Item (iv) is an immediate consequence of (a)-(d) above. It remains to prove (v). By Lemma~\ref{key}, $\Spec(T_i)=\Spec(T_{i+1})\cup \{x_i\}$, where $x_i=p_i^{(i)}=0$, which is the desired result.

\end{proof}
\section{Algorithm}
\label{sec:alg}

The next lemma provides explicit formulas for the edge weights $\epsilon_{ij}^{(1)}$ for the weighted threshold graph $T_1(p^{(1)},\epsilon^{(1)})$, which leads to a constructive procedure for obtaining a matrix $M \in \mathcal{S}(G)$ with at most four distinct eigenvalues for every threshold graph $G$.


\begin{lemat}
\label{rec}
Let $\lambda$ be a nonzero real number, and $T_1$ be
a threshold graph with bag representation given by $\{B_{i}^{(1)}\}_{i=1}^{r}$, where $|B_{i}^{(1)}|=1$ for every $i$ and $r>1$. Let $T_{1}(p^{(1)},\epsilon^{(1)}),T_{2}(p^{(2)},\epsilon^{(2)}),\ldots,T_{r}(p^{(r)},\epsilon^{(r)})$ be the sequence of weighted threshold graphs defined in Lemma \ref{sequence_of_graphs} with $\epsilon^{(1)}_{1,j}=\epsilon^{(1)}_{2,j}$ for even $4\leq j\leq r$.
\begin{itemize}

\item[(a)] If $i$ is odd, let $q_i\in\{0,2\}$ be such that $i+1\equiv q_i~(\bmod \, 4)$. For $\ell\in \{0,\ldots, i-1\}$, we have
\begin{equation}
\label{edges0}\epsilon^{(i-\ell)}_{i-\ell,i+1}=\frac{(q_i-1)\lambda}{\sqrt{2}^{\ell}}.
\end{equation}

\item[(b)] 
If $j$ is even, let $q_j\in\{0,2\}$ be such that $j\equiv q_j~(\bmod \, 4)$. For $2 \leq i < j\leq r$, we have
    \begin{equation}
        \label{edges1}
        \epsilon^{(1)}_{i,j}=\frac{(q_j-1)\lambda}{\sqrt{2}^{j-i}}.
    \end{equation}

\end{itemize}

\end{lemat}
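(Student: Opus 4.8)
The plan is to read a few elementary identities off the explicit construction of the sequence $T_1,T_2,\dots,T_r$ in the proof of Lemma~\ref{sequence_of_graphs} and then iterate them. The identities I need, valid at every admissible step $k\in\{1,\dots,r-1\}$, are the following. By item~(iii), if $k$ is odd and $k+1\equiv q\pmod 4$ with $q\in\{0,2\}$, then $\epsilon^{(k)}_{k,k+1}=(q-1)\lambda$. By items~(ii) and~(iv), for every even $j$ with $k+1<j\le r$ one has $\epsilon^{(k)}_{k,j}=\epsilon^{(k)}_{k+1,j}$ and $\epsilon^{(k+1)}_{k+1,j}=\sqrt 2\,\epsilon^{(k)}_{k+1,j}$, and hence
\[
\epsilon^{(k)}_{k,j}=\frac{1}{\sqrt 2}\,\epsilon^{(k+1)}_{k+1,j}.
\]
Finally, for $2\le k\le r-1$ the proof of Lemma~\ref{sequence_of_graphs} sets $\epsilon^{(1)}_{k,j}=\epsilon^{(1)}_{k+1,j}/\sqrt 2$ for every even $j>k+1$, and sets $\epsilon^{(1)}_{k,k+1}=(q-1)\lambda/\sqrt 2$ whenever $k$ is odd and $k+1\equiv q\pmod 4$ (this last value is exactly what is fixed in Cases~1 and~3 of that proof).

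For part~(a), I would fix an odd $i$ with $i+1\le r$ and induct on $\ell\in\{0,\dots,i-1\}$. The base case $\ell=0$ is item~(iii) at step $k=i$: as $i+1\equiv q_i\pmod 4$, it gives $\epsilon^{(i)}_{i,i+1}=(q_i-1)\lambda$. For the inductive step, put $k=i-\ell$ with $\ell\ge1$; then $k<i$, so $i+1>k+1$, and since $i+1$ is even the displayed identity applies at step $k$ with $j=i+1$, giving $\epsilon^{(k)}_{k,i+1}=\epsilon^{(k+1)}_{k+1,i+1}/\sqrt 2$. Substituting the inductive hypothesis $\epsilon^{(k+1)}_{k+1,i+1}=(q_i-1)\lambda/\sqrt{2}^{\ell-1}$ yields $\epsilon^{(i-\ell)}_{i-\ell,i+1}=(q_i-1)\lambda/\sqrt{2}^{\ell}$, as claimed.

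For part~(b), I would fix an even $j$ and induct downward on $i\in\{2,\dots,j-1\}$. In the base case $i=j-1$, the index $j-1$ is odd (because $j$ is even), so $\epsilon^{(1)}_{j-1,j}=(q_j-1)\lambda/\sqrt 2$ by the value fixed in Cases~1 and~3 of the proof of Lemma~\ref{sequence_of_graphs} (taking $q=q_j$ there, as $j\equiv q_j\pmod 4$), which is the claimed formula with $i=j-1$. For $2\le i<j-1$, that same proof sets $\epsilon^{(1)}_{i,j}=\epsilon^{(1)}_{i+1,j}/\sqrt 2$ (legitimate since $j>i+1$ and $j$ is even), and substituting the inductive hypothesis $\epsilon^{(1)}_{i+1,j}=(q_j-1)\lambda/\sqrt{2}^{j-i-1}$ gives $\epsilon^{(1)}_{i,j}=(q_j-1)\lambda/\sqrt{2}^{j-i}$, closing the induction.

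The only delicate aspect is the bookkeeping: each use of items~(ii)--(iv) of Lemma~\ref{sequence_of_graphs} must be at an admissible step $k\in\{1,\dots,r-1\}$ with $j$ even and $j>k+1$, and one should check that the edges being manipulated really exist (for instance $\{B_{i-\ell}^{(i-\ell)},B_{i+1}^{(i-\ell)}\}$ does, precisely because $i+1$ is even and $i-\ell\le i+1\le r$). I do not expect any conceptual obstacle beyond keeping these index ranges straight; as a consistency check, specializing to $r=6$ recovers exactly the edge weights computed in Example~\ref{example_smalltree}.
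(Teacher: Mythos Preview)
Your argument is correct. Part~(a) is essentially identical to the paper's proof: both induct on $\ell$, use item~(iii) for the base case, and combine items~(ii) and~(iv.a) to obtain the halving identity $\epsilon^{(k)}_{k,i+1}=\epsilon^{(k+1)}_{k+1,i+1}/\sqrt 2$ for the inductive step.

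Part~(b) is where you and the paper diverge slightly. The paper does not run a separate induction; instead it reduces~(b) to~(a) in one stroke: for $2\le i<j$ it uses (iv.b) and (iv.a) to write $\epsilon^{(1)}_{i,j}=\epsilon^{(i-1)}_{i,j}=\epsilon^{(i)}_{i,j}/\sqrt 2$, then recognizes $\epsilon^{(i)}_{i,j}$ as the quantity in~(a) with odd index $j-1$ and $\ell=j-i-1$. Your route instead stays entirely at the level of $T_1$, inducting downward on $i$ via the explicit recursion $\epsilon^{(1)}_{i,j}=\epsilon^{(1)}_{i+1,j}/\sqrt 2$ that is set in Cases~1--4 of the proof of Lemma~\ref{sequence_of_graphs}, with base case $i=j-1$ read off from Cases~1 and~3. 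Both are equally short; the paper's version has the mild advantage of making~(b) a corollary of~(a), while yours has the advantage of never touching the intermediate graphs $T_k$ for $k\ge 2$.
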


\begin{proof}

For $1\leq i\leq r-1$, let $q_i\in\{0,1,2,3\}$ be such that $i+1\equiv q_i~(\bmod \, 4)$. For part (a), we proceed our proof by induction on $\ell$. For $\ell=0$, we use the fact that $\epsilon^{(i)}_{i,i+1}=(q_i-1)\lambda$, which holds by part (iii) of Lemma~\ref{sequence_of_graphs}. Assume that the result holds for $\ell-1$, where $0 \leq \ell-1 <r-1$. Note that
\begin{eqnarray*}
\epsilon^{(i-\ell)}_{i-\ell,i+1}&\stackrel{(ii)}{=}&\epsilon^{(i-\ell)}_{i-\ell+1,i+1}\stackrel{(iv.a)}{=}\frac{\epsilon^{(i-\ell+1)}_{i-\ell+1,i+1}}{\sqrt{2}} \stackrel{(IH)}{=}\frac{1}{\sqrt{2}}\cdot\frac{(q_i-1)\lambda}{\sqrt{2}^{\ell-1}}=\frac{(q_i-1)\lambda}{\sqrt{2}^{\ell}},
\end{eqnarray*}
as required. Note that we justified the validity of each equation based on an item of Lemma~\ref{sequence_of_graphs} or on the induction hypothesis (IH).

For part (b), consider a pair $(i,j)$, where $2\leq i\leq r-1$, and $i<j\leq r$, where $j$ is even. Let $q_j\in\{0,2\}$ be such that $j\equiv q_j~(\bmod \, 4)$. We have
\begin{eqnarray*}
\epsilon^{(1)}_{i,j}&\stackrel{(iv.b)}{=}&\epsilon^{(i-1)}_{i,j}\stackrel{(iv.a)}{=}\frac{\epsilon^{(i)}_{i,j}}{\sqrt{2}}=\frac{\epsilon^{(j-1-(j-i-1))}_{(j-1-(j-i-1)),j}}{\sqrt{2}}\stackrel{(a)}{=}\frac{1}{\sqrt{2}}\cdot\frac{(q_j-1)\lambda}{\sqrt{2}^{j-i-1}}=\frac{(q_j-1)\lambda}{\sqrt{2}^{j-i}}.
\end{eqnarray*}

\end{proof}

 Lemma~\ref{rec} leads to an explicit algorithm for obtaining a matrix $M \in \mathcal{S}(G)$ with $\DSpec(M) \subseteq \{-\lambda,0,\lambda,2\lambda\}$ for every threshold graph $G$ and for every nonzero real number $\lambda$.


\begin{algorithm}[H]
\caption{Generating a matrix $M\in S(G)$ such that DSpec$(M)\subseteq\{-\lambda,0,\lambda,2\lambda\}$.}\label{algT1}
\begin{algorithmic}
\Require a nonzero real number $\lambda$ and a sequence of positive integers $a_{1},\ldots,a_{r}$.
\Ensure a matrix $M\in S(G)$ such that DSpec$(M)\subseteq\{-\lambda,0,\lambda,2\lambda\}$.\\
Computing the weights of the vertices and internal edges:
\IIf{ $i \equiv 1 \,(\bmod~ 4)$} $p_{i}=0$ and $\epsilon_{i}=0$\EndIIf
    \IIf{ $i \equiv 2 \,(\bmod~ 4)$}  $p_{i}=\frac{\lambda(a_{i}-1)}{a_{i}}$ and $\epsilon_{i}=-\frac{\lambda}{a_{i}}$\EndIIf
    \IIf{ $i \equiv 3 \,(\bmod~ 4)$} $p_{i}=\lambda$ and $\epsilon_{i}=0$\EndIIf
    \IIf{ $i \equiv 0 \,(\bmod~ 4)$} $p_{i}=\frac{\lambda}{a_{i}}$ and $\epsilon_{i}=\frac{\lambda}{a_{i}}$\EndIIf
\\
Computing the weights of the edges between bags:
\For{$2\leq i\leq r-1$}
\For{$i< j\leq r$, $j$ even}
\IIf{$j \equiv 2 \,(\bmod~ 4)$ }
     $\epsilon^{(1)}_{i,j}=\frac{\lambda}{\sqrt{2}^{j-i}}$
\EndIIf
\IIf{$j \equiv 0 \,(\bmod~ 4)$}
    $\epsilon^{(1)}_{i,j}=\frac{-\lambda}{\sqrt{2}^{j-i}}$
\EndIIf
\EndFor
\EndFor\\
$\epsilon^{(1)}_{1,2}=\lambda$
\FFor{$4\leq j\leq r$, $j$ even}
 $\epsilon^{(1)}_{1,j}=\epsilon^{(1)}_{2,j}$
\EndFFor
\For{ $1\leq i\leq r-1$}
\FFor{$i< j\leq r$, $j$ even}
 $\epsilon_{i,j}=\frac{\epsilon^{(1)}_{i,j}}{\sqrt{a_{i}a_{j}}}$
\EndFFor
\EndFor



\end{algorithmic}
\label{Alg_generating}
\end{algorithm}




Let $G$ be a threshold graph with binary sequence $(0^{a_{1}}1^{a_{2}}0^{a_{3}}1^{a_{4}}\cdots b_{r}^{a_{r}})$. Define $$A= \sum_{\substack{ 1 \leq i \leq n \\ i \equiv 1 \,(\bmod 4) }} (a_{i}-1)+\sum_{\substack{ 1 \leq i \leq n \\ i \equiv 0 \,(\bmod 4)  }} (a_{i}-1)\text{ and }B=\sum_{\substack{ 1 \leq i \leq n \\ i \equiv 2 \,(\bmod 4)  }} (a_{i}-1)+\sum_{\substack{ 1 \leq i \leq n \\ i \equiv 3 \,(\bmod 4)  }} (a_{i}-1).$$ We conclude this section by noting that, as a consequence of Lemmas \ref{reduced_graph} and \ref{sequence_of_graphs}, the matrix \( M \in S(G) \) generated by Algorithm 1 has the following spectrum for \( r = 4k + q \), where \( k \geq 0 \) is an integer and \( q \in \{0, 1, 2, 3\} \):
\begin{enumerate}
    \item [(a)] if $q=0$ then $\Spec(M)=\{-\lambda^{[k]},0^{[A+k]},\lambda^{[B+k]},2\lambda^{[k]}\}$,
    \item [(b)] if $q=1$ then $\Spec(M)=\{-\lambda^{[k]},0^{[A+k+1]},\lambda^{[B+k]},2\lambda^{[k]}\}$,
    \item [(c)] if $q=2$ then $\Spec(M)=\{-\lambda^{[k+1]},0^{[A+k]},\lambda^{[B+k+1]},2\lambda^{[k]}\}$,
    \item [(d)] if $q=3$ then $\Spec(M)=\{-\lambda^{[k+1]},0^{[A+k]},\lambda^{[B+k+2]},2\lambda^{[k]}\}$.
\end{enumerate}

\section{Final Remarks}
\label{sec:concl}

Fix a real number $\lambda \neq 0$ and a threshold graph $G$. In this paper, we have devised an iterative procedure that generates a matrix $M \in \mathcal{S}(G)$ with $\DSpec(M) \subseteq \{-\lambda,0,\lambda,2\lambda\}$. As a consequence, we have $q(G)\leq 4$ for every threshold graph $G$. This is an improvement over the previous known bound, for which $q(G)$ was bounded above by a linear function on the number of vertices of $G$. This is an important step towards classifying threshold graphs with respect to $q(G)$. Given a threshold graph $G$ with at least one edge, it remains to decide whether $q(G)=2$, $q(G)=3$ or $q(G)=4$. Recall that the structure of threshold graphs such that $q(G)=2$ has been studied in~\cite{fallat2022minimum}. 

A second research direction motivated by this paper is to find natural graph classes (in the library~\cite{isgci}, for example) for which the function $q(G)$ is bounded above by an absolute constant. We believe that the ideas in this paper can be extended to other classes, such as cographs.

A third research direction would be to identify graph classes $\mathcal{C}$ with the following property, which we call Property C. If $G \in \mathcal{C}$ is a graph with connected components $G_1,\ldots,G_s \in \mathcal{C}$, then $q(G) = \max_i q(G_i)$. It is easy to see that $q(G) \geq \max_i q(G_i)$. Corollary~\ref{union} implies that Property C may hold for threshold graphs. Indeed, it holds for threshold graphs $G$ such that $q(G)=4$ or $q(G)=1$. On the other hand, forests are known not to satisfy Property C, see \cite[Corollary 3.5]{AHSarxiv}.



\section*{Acknowledgments}
We are thankful to two anonymous referees, whose careful reading and thoughtful comments improved the paper. Allem, Hoppen and Sibemberg acknowledge the partial support by
CAPES under project MATH-AMSUD 88881.694479/2022-01. They also acknowledge partial support by CNPq (Proj.\ 408180/2023-4). L.\ E.\ Allem was partially supported by FAPERGS 21/2551-
0002053-9. C.~Hoppen was partially supported of CNPq (Proj.\ 315132/2021-3). Tura was partially supported by CNPq Grant 200716/2022-0. CAPES is Coordena\c{c}\~{a}o de Aperfei\c{c}oamento de Pessoal de N\'{i}vel Superior. CNPq is Conselho Nacional de Desenvolvimento Cient\'{i}fico e Tecnol\'{o}gico. FAPERGS is Funda\c{c}\~{a}o de Amparo \`{a} Pesquisa do Estado do Rio Grande do Sul. 

\bibliographystyle{amsplain}
\providecommand{\bysame}{\leavevmode\hbox to3em{\hrulefill}\thinspace}
\providecommand{\MR}{\relax\ifhmode\unskip\space\fi MR }
\providecommand{\MRhref}[2]{%
  \href{http://www.ams.org/mathscinet-getitem?mr=#1}{#2}
}
\providecommand{\href}[2]{#2}

\end{document}